\documentclass[letterpaper, 10 pt, conference]{ieeeconf} 
\IEEEoverridecommandlockouts

\makeatletter
\def\endfigure{\end@float}
\def\endtable{\end@float}
\makeatother


\makeatletter
\@ifundefined{proof}{
  \newenvironment{proof}[1][Proof]{\par\noindent\textit{#1. }\ignorespaces}
  {\hfill$\square$\par}
}{}
\makeatother

\newenvironment{IEEEkeywords}
{\par\noindent\textbf{Keywords: }\ignorespaces}
{\par}

\usepackage[T1]{fontenc}
\usepackage{mathtools} 
\usepackage{amsfonts}
\usepackage{dsfont}
\usepackage{newtxtext,newtxmath}
\usepackage{bm}
\usepackage[final]{microtype}
\usepackage{placeins}

\usepackage[caption=false,font=footnotesize]{subfig}
\usepackage{booktabs}

\usepackage{graphicx}

\usepackage{xurl}
\usepackage{array}
\usepackage{cite}

\newtheorem{theorem}{Theorem}
\newtheorem{lemma}{Lemma}
\newtheorem{proposition}{Proposition}
\newtheorem{assumption}{Assumption}

\newtheorem{remark}{Remark}
\newtheorem{corollary}{Corollary}

\DeclareMathOperator{\proj}{proj}
\DeclareMathOperator{\col}{col}
\DeclareMathOperator{\dist}{dist}
\DeclareMathOperator{\diag}{diag}

\makeatletter
\let\DLI@thebibliography\thebibliography
\renewcommand{\thebibliography}[1]{%
  \DLI@thebibliography{#1}%
  \setlength{\itemsep}{0pt}%
  \setlength{\parskip}{0pt}%
  \setlength{\parsep}{0pt}%
}
\makeatother

\title{\LARGE \bf
Distributed Vector Assembly via Selector-Anchored Diffusion with Local Innovation}
\author{%
Diana Vieira Fernandes$^{1,2}$ and Carlos Santos Silva$^{2}$
\thanks{$^{1}$Diana Vieira Fernandes is with the Department of Engineering \& Public Policy, Carnegie Mellon University, Pittsburgh, PA, USA, and also with IN+/LARSyS, Instituto Superior T\'ecnico, Universidade de Lisboa, Lisbon, Portugal {\tt\small dianaimf@andrew.cmu.edu}}%
\thanks{$^{2}$Carlos Santos Silva is with IN+/LARSyS, Instituto Superior T\'ecnico, Universidade de Lisboa, Lisbon, Portugal {\tt\small carlos.santos.silva@tecnico.ulisboa.pt}}%
\thanks{This work was supported by Funda\c{c}\~ao para a Ci\^encia e a Tecnologia, Portugal, through the Carnegie Mellon Portugal Program under fellowship FCT 2025.00027.PRT and grants UIDB/50009/2025 and LA/P/0083/2020.}
}

\begin{document}
\maketitle
\thispagestyle{empty}
\pagestyle{empty}

\begin{abstract}
We introduce Selector-Anchored Diffusion with Local Innovation (D+LI), a distributed vector-assembly primitive that constructs complete local replicas from independently owned authoritative blocks. Orthogonal selectors preserve block identity, local innovation anchors each owner's contribution, and Laplacian diffusion propagates these contributions across the network. We show that pure averaging with raw additive selector sources cannot isolate this replica. The D+LI error operator decomposes coordinatewise as the identity minus an owner-grounded Laplacian, yielding global contraction with a factor independent of the total and block dimensions. For time-varying outer targets, we establish a finite-$T$ input-to-state stability (ISS) bound and a self-consistent estimator–controller gain condition for Lipschitz coordination maps, including non-potential ones. For semi-algebraic potential problems, we prove Kurdyka--\L{}ojasiewicz (KL) finite-length convergence of the coupled decision--assembly iteration with a fixed finite communication budget per outer iteration under explicit timescale-separation conditions. Three numerical experiments demonstrate contraction tightness and finite-horizon tracking, verify an admissible KL timescale, and illustrate discrete and non-potential outer maps.
\end{abstract}

\begin{IEEEkeywords}
Vector assembly, decentralized coordination, Laplacian diffusion, input-to-state stability, KL convergence.
\end{IEEEkeywords}

\section{Introduction}
 Coupled decentralized estimation and control often require agent \(i\) to recover the specific remote blocks \(u_j\) entering its local update. This creates a distributed vector-assembly problem: independently owned blocks must be assembled into complete local replicas while preserving their coordinate identities and write authority. We introduce \textbf{Selector-Anchored Diffusion with Local Innovation (D+LI)} as a reusable primitive for this task.
Agent \(i\) is authoritative only for block \(u_i^k\); each inner round combines diffusion of complete local replicas with the selector innovation \(\mathbf E_i(u_i^k-\mathbf E_i^\top s_i^{k,t})\). The orthogonal selectors embed the independently owned blocks in \(u^k=\sum_{i=1}^{N}\mathbf{E}_i u_i^k\), while diffusion propagates their values to every agent. For a fixed target, the recursion converges to \(\mathbf1_N\otimes u^k\); at finite \(T\), it admits explicit contraction and moving-target bounds. D+LI was first used for decentralized constrained control of active distribution networks~\cite{fernandes_decentralized_2025}. Here it is isolated as a deterministic distributed-estimation layer, analyzed independently of that application, and then coupled to general Lipschitz coordination maps and, for full outer-loop convergence, to potential maps.

\subsection{Related Work and Positioning}
Consensus+Innovations (C+I) and related distributed estimators combine neighbor averaging with innovations from partial observations and can reconstruct a complete vector at every sensor~\cite{kar_distributed_2012,kar_consensus_2013,xie_fully_2012,sahu_mathcal_2018}. With \(H_i=\mathbf E_i^\top\) and \(z_i=u_i\), the C+I correction \(H_i^\top(z_i-H_i x_i)\) has the same algebraic form as the innovation used here. The distinction is the target and analysis: C+I estimates a common exogenous vector from distributed observation models, whereas D+LI assembles a contemporaneous decision vector endogenously from independently owned authoritative blocks. We establish its fixed-target replica, grounded-Laplacian structure, dimension-independent contraction factor, finite-\(T\) moving-target bounds, and coupling to a separate outer map. Distributed observers also reconstruct full states from partial local observations~\cite{mitra_distributed_2018}. Dynamic average consensus and shared-variable methods such as EXTRA and DIGing target time-varying aggregates or a common decision~\cite{kia_tutorial_2019,shi_extra_2015,nedic_achieving_2017,qu_harnessing_2018}, while pinning and leader-following synchronize to common references~\cite{chen_reaching_2009}. Partial-decision NE/GNE methods are the closest optimization-side analogue because agents maintain estimates of the joint decision profile~\cite{pavel_distributed_2020,bianchi_fully_2021,bianchi_fast_2022}; however, those estimates are embedded in and analyzed jointly with the equilibrium-seeking iteration. D+LI instead isolates vector assembly as an inner distributed-estimation primitive. Feedback optimization and ADMM address physical feedback and constraint decomposition rather than this assembly task~\cite{erseghe_distributed_2014,dallanese_distributed_2013,picallo_closing_2020}. Table~\ref{tab:comparison} in Appendix~\ref{app:comparison} provides a detailed architectural comparison.

Our contributions are summarized as follows:
\begin{itemize}
    \setlength{\topsep}{1pt}
    \setlength{\itemsep}{0pt}
    \setlength{\parsep}{0pt}
    \setlength{\partopsep}{0pt}
    \item We formulate Selector-Anchored D+LI as a multi-source, selector-partitioned vector-assembly primitive and show why raw additive selector sources under pure averaging cannot isolate the assembled vector (Proposition~\ref{prop:no_avg}).
    \item We prove global contraction to the fixed-target replica \(\mathbf s^\star=\mathbf1_N\otimes u^k\). Coordinatewise, the operator is the identity minus an owner-grounded Laplacian, yielding a contraction factor independent of the total and block dimensions (Lemma~\ref{lem:contraction}, Remark~\ref{rem:subspace}).
    \item We establish finite-\(T\) ISS and tracking bounds under outer-target variation, together with a self-consistent gain bound requiring only a Lipschitz coordination map on a compact set; potentiality and monotonicity are unnecessary for these estimator results (Theorem~\ref{thm:iss_2ts}, Corollary~\ref{cor:tracking}, Theorem~\ref{thm:small_gain_2ts}).
    \item For potential maps with semi-algebraic \(F\), we prove finite-length KL convergence of the coupled projected-gradient/D+LI scheme under an explicit timescale-separation condition (Theorem~\ref{thm:kl_conf}).
\end{itemize}

\section{Problem Formulation}
Consider \(N\ge2\) agents on a communication graph \(\mathcal G=(\mathcal V,\mathcal E)\). 
\begin{assumption}[Communication graph]
\label{ass:graph}
The graph \(\mathcal G\) is fixed, connected, and undirected.
\end{assumption}
Let \(L\) denote its Laplacian and \(\mathcal N_i:=\{j:(i,j)\in\mathcal E\}\) the neighbor set of agent \(i\). The global decision vector is block-partitioned as \(u=[u_1^\top,\dots,u_N^\top]^\top\in\mathbb{R}^d\), where \(u_i\in\mathbb{R}^{d_i}\) is controlled by agent \(i\), and \(\sum_i d_i=d\). The feasible set is \(\mathcal U=\mathcal U_1\times\cdots\times\mathcal U_N\), where each \(\mathcal U_i\) is the local feasible set of agent \(i\). Agent \(i\) has exclusive write authority over its block through the selector \(\mathbf E_i\), whose partition identities are given in \eqref{eq:selector_partition}. Local costs are $f_i:\mathbb{R}^d\to\mathbb{R}$, and coupled constraints $c(u)\le0$, with $c:\mathbb{R}^d\to\mathbb{R}^m$, are penalized by $\phi(c):=\|[c]_+\|_2^2$ with weight $\rho>0$.

We study a decentralized coordination problem specified by local block maps \(g_i:\mathbb R^d\to\mathbb R^{d_i}\). One important instance is the penalized-gradient map
\begin{equation}\label{eq:coord_map_local}
g_i(u):=\nabla_{u_i}f_i(u)
+\rho\,\nabla_{u_i}\phi(c(u)),
\end{equation}
where \(f_i\) and \(c\) are continuously differentiable. The estimator and gain results below apply to any block maps satisfying Assumption~\ref{ass:smooth_c}. The global coordination map assembles these blocks as
\begin{equation}\label{eq:coord_map}
g(u):=\sum_{i=1}^N \mathbf{E}_i\, g_i(u)\in\mathbb{R}^d.
\end{equation}
For a stepsize \(\eta>0\), the coordination problem is to find \(u^\star\in\mathcal U\) satisfying the projected fixed-point condition
\begin{equation}\label{eq:coord_fp}
u^\star=\proj_{\mathcal{U}}\!\big(u^\star-\eta\,g(u^\star)\big),
\end{equation}
equivalently the generalized stationarity condition
\begin{equation}\label{eq:coord_ge}
0\in g(u^\star)+N_{\mathcal{U}}(u^\star),
\end{equation}
where $N_{\mathcal{U}}(\cdot)$ is the normal cone and $\proj_{\mathcal{U}}$ acts blockwise.

\begin{remark}[Potential special case]\label{rem:pot_special}
If there exists a scalar potential $F$ such that $\nabla_{u_i}F(u)=g_i(u)$ for all $i$, then \eqref{eq:coord_ge} reduces to the first-order stationarity condition of projected gradient on $F$. The contraction, ISS, and estimator--controller gain analyses do not require this potential structure. The outer-loop convergence analysis of Section~\ref{sec:kl} specializes to the potential case under Assumption~\ref{ass:kl}.
\end{remark}

\begin{assumption}[Feasible-set and block-map regularity]
\label{ass:smooth_c}
For every \(i\), the set \(\mathcal U_i\subset\mathbb R^{d_i}\) is nonempty, compact, and convex, and
\begin{align}
\mathcal U:=\mathcal U_1\times\cdots\times\mathcal U_N.
\end{align}
For every \(i\), the block map \(g_i:\mathbb R^d\to\mathbb R^{d_i}\) satisfies, for some \(L_i\ge0\),
\begin{align}
\|g_i(x)-g_i(y)\|_2
\le L_i\|x-y\|_2,
\qquad \forall x,y\in\mathcal U.
\end{align}
Define \(L_{\mathrm{blk}}:=\max_i L_i\). Each agent knows the block sets \(\{\mathcal U_j\}_{j=1}^N\), or equivalently their projection operators, so that it can evaluate the blockwise projection \(\proj_{\mathcal U}\).
\end{assumption}
By selector orthogonality,
\begin{align}
\operatorname{Lip}_{\mathcal U}(g)
\le
\bar L_g
:=
\left(\sum_{i=1}^N L_i^2\right)^{1/2}
\le
\sqrt N\,L_{\mathrm{blk}}.
\end{align}
Thus \(\bar L_g\) is always an admissible global Lipschitz constant. Hereafter, \(L_g\) denotes any certified Lipschitz constant of \(g\) on \(\mathcal U\), including a sharper structure-specific constant when available. The derivation is given in Appendix~\ref{app:estimator_gain_proofs}.
Consequently, \(g\) is continuous on the compact set \(\mathcal U\), and \(G_{\max}:=\max_{u\in\mathcal U}\|g(u)\|_2<\infty\).
\paragraph*{Partial Observability and Coupling.} 
The main challenge is \textit{partial observability of the coupled decision vector}. While agent $i$ has actuation authority over $u_i$, its local block gradient $\nabla_{u_i} f_i(u)$ and constraint evaluation $c(u)$ generally depend on the full vector $u$. For example, in AC power flow, the voltage magnitude at node $i$ is a nonlinear function of power injections at all other nodes $j \neq i$. To perform a decentralized projected update $u_i^+=\proj_{\mathcal U_i}\!\big(u_i-\eta\, g_i(u)\big)$, agent $i$ requires knowledge of $u_j$ for all $j$. Since centralized communication is unavailable, agent $i$ must maintain a local estimate $s_i \in \mathbb{R}^d$ of the global vector $u$. The objective is to design a distributed update law such that $s_i \to u$ strictly through neighbor-to-neighbor communication.

\section{The Selector-Anchored D+LI Dynamics}
\label{sec:Selector-Anchored}
To solve the estimation problem under partial observability, we introduce the Selector-Anchored D+LI dynamics. 
\subsection{Definitions}
Let $s_i^{k,t}\in\mathbb{R}^d$ denote agent $i$'s estimate of $u^k$ at inner round $t$ (outer iteration $k$). We define the \textbf{Selector Matrix} $\mathbf{E}_i \in \{0,1\}^{d \times d_i}$ as the block-column of the identity matrix corresponding to the indices of $u$ controlled by agent $i$.
\paragraph{Selector partition.}
Because each \(\mathbf E_i\) is the block-column of \(I_d\) associated with \(u_i\), the selector matrices satisfy
\begin{equation}\label{eq:selector_partition}
\mathbf{E}_i^\top \mathbf{E}_i = I_{d_i},
\quad
\mathbf{E}_i^\top \mathbf{E}_j = \mathbf{0}\quad \forall i\neq j,
\quad
\sum_{i=1}^N \mathbf{E}_i \mathbf{E}_i^\top = I_d.
\end{equation}
This property implies that the global state space is perfectly partitioned among agents, with no overlap in actuation authority. Define the assembled global decision/state vector at outer iteration $k$:
\begin{align}\label{eq:assembled_vector}
u^k
:=\sum_{i=1}^N\mathbf E_i u_i^k\in\mathbb R^d,
\quad \mathbf E_i^\top u^k
=u_i^k
\quad\text{(by \eqref{eq:selector_partition})}.
\end{align}
\subsection{Update Law (inner loop over $t$; outer loop over $k$)}
At outer iteration $k$, let $u^{k}$ denote the assembled vector defined in \eqref{eq:assembled_vector}, and let $s_i^{k,t}\in\mathbb{R}^d$ denote agent $i$'s estimate at inner round $t$. The inner update is
\begin{align}\label{eq:DLI_update_inner}
s_i^{k,t+1}
&=
s_i^{k,t}
-\alpha\sum_{j\in\mathcal N_i}\big(s_i^{k,t}-s_j^{k,t}\big)
+\mathbf{E}_i\Big(u_i^{k} - \mathbf{E}_i^\top s_i^{k,t}\Big).
\end{align}
Define the symmetric diffusion matrix $W:=I_N-\alpha L$. Define $\mathbf{s}^{k,t}:=\col(s_1^{k,t},\dots,s_N^{k,t})\in\mathbb{R}^{Nd}$ and $\mathbf{P}:=\mathrm{blkdiag}(\mathbf{P}_1,\dots,\mathbf{P}_N)$ with $\mathbf{P}_i=\mathbf{E}_i\mathbf{E}_i^\top$. The stacked inner update reads
\begin{equation}\label{eq:stacked_update_inner}
\mathbf{s}^{k,t+1}=(W\otimes I_d)\mathbf{s}^{k,t}
+\mathbf{P}\Big((\mathbf{1}_N\otimes u^{k})-\mathbf{s}^{k,t}\Big).
\end{equation}
Each inner round communicates a \(d\)-dimensional replica per edge. The resulting fixed-memory linear iteration supports warm starts and admits the finite-\(T\) moving-target bound of Theorem~\ref{thm:iss_2ts}.
\paragraph{Warm start between outer iterations.}
We carry estimates across outer steps:
\begin{equation}\label{eq:warmstart}
\mathbf{s}^{k+1,0}:=\mathbf{s}^{k,T}.
\end{equation}
Let \(\mathbf H:=\mathbf 1_N\otimes I_d\). The stacked source term is
\begin{align}
\mathbf b(u^k)
&:=\mathbf P\mathbf H u^k \notag\\
&=
\col\!\left(
\mathbf E_1u_1^k,\ldots,\mathbf E_Nu_N^k
\right).
\end{align}
Thus the recursion can be written as
\begin{align}
\mathbf s^{k,t+1}
=
\mathbf A_{\rm sys}\mathbf s^{k,t}
+\mathbf b(u^k),
\qquad
\mathbf A_{\rm sys}
:=
(W\otimes I_d)-\mathbf P.
\end{align}
The \(N\) source channels act on mutually orthogonal coordinate subspaces, and the identity \(\sum_i\mathbf E_i\mathbf E_i^\top=I_d\) guarantees complete coverage of the assembled vector. Hence the fixed point is generated collectively by the distributed block sources; no individual agent possesses or injects the complete target. Note that agent $i$ does not need the full $u^{k}$: since $\mathbf{E}_i^\top u^{k}=u_i^{k}$, the innovation term uses only local truth.

\paragraph{Local innovation (write access).}
The innovation term $\mathbf{E}_i(u_i^k-\mathbf{E}_i^\top s_i^{k,t})$ \emph{anchors} the locally owned coordinates by injecting the instantaneous substitution error on agent $i$'s block. Note that diffusion and innovation act in the same inner step; hence, the locally owned coordinates are not set exactly to $u_i^k$ at each round, but the combined mapping contracts the global stacked error (Lemma~\ref{lem:contraction}).

\paragraph{Diffusion (read access).}
Let $\mathcal I_i\subset\{1,\dots,d\}$ denote the coordinate set owned by agent $i$ (columns selected by $\mathbf{E}_i$), and $\mathcal I_i^c:=\{1,\dots,d\}\setminus\mathcal I_i$ its complement. For any coordinate $\ell\in\mathcal I_i^c$, the innovation term does not act on entry $\ell$, and the update reduces to Laplacian diffusion on that coordinate:
\begin{align}
\big[s_i^{k,t+1}\big]_\ell
=
\big[s_i^{k,t}\big]_\ell
-\alpha \sum_{j\in\mathcal N_i}\Big(\big[s_i^{k,t}\big]_\ell-\big[s_j^{k,t}\big]_\ell\Big),
\quad \ell\in\mathcal I_i^c.
\end{align}
Hence, remote values propagate through the network and are replicated asymptotically.

\begin{remark}[Communication cost]\label{rem:comm_cost}
Each inner round exchanges the full $d$-dimensional estimate per edge, versus the shared-variable dimension in shared-variable consensus; exact replication is also achievable by flooding in $\mathrm{diam}(\mathcal G)$ rounds. D+LI trades these alternatives for a routing-stateless linear iteration: no message logs or routing state, warm-started across outer iterations, with ISS to outer variation (Theorem~\ref{thm:iss_2ts}) quantifying the accuracy of the finite-$T$ replica under a moving target.
\end{remark}

\section{Stability and ISS Analysis}\label{sec:Stability_ISS}

\subsection{Error Dynamics (inner contraction; outer disturbance)}
Define the inner-loop estimation error
\begin{align}
e_i^{k,t}:=s_i^{k,t}-u^{k},\qquad
\mathbf{e}^{k,t}:=\col(e_1^{k,t},\dots,e_N^{k,t})\in\mathbb{R}^{Nd}.
\end{align}
From \eqref{eq:stacked_update_inner}, the inner error dynamics are
\begin{equation}\label{eq:error_dynamics_inner}
\mathbf{e}^{k,t+1}=\underbrace{\Big((W\otimes I_d)-\mathbf{P}\Big)}_{\mathbf{A}_{\mathrm{sys}}}\mathbf{e}^{k,t}.
\end{equation}
Across outer iterations, using the warm start \eqref{eq:warmstart} and $\Delta u^{k}:=u^{k+1}-u^{k}$,
\begin{equation}\label{eq:error_outer}
\mathbf{e}^{k+1,0}=\mathbf{s}^{k,T}-(\mathbf{1}_N\otimes u^{k+1})
=\mathbf{e}^{k,T}-(\mathbf{1}_N\otimes \Delta u^{k}).
\end{equation}

\subsection{Inner Loop Contraction}
Standard average consensus contracts the disagreement subspace while leaving the consensus subspace invariant. In D+LI, the selector-partitioned innovations also remove these invariant consensus modes because the distributed source subspaces jointly cover \(\mathbb R^d\).

\begin{lemma}[Global Contraction]\label{lem:contraction}
Under Assumption~\ref{ass:graph}, the Laplacian \(L=L^\top\) has eigenvalues \(0=\lambda_1(L)<\lambda_2(L)\le\cdots\le\lambda_N(L)\).
Let $\mathbf{P}=\mathrm{blkdiag}(\mathbf{P}_1,\dots,\mathbf{P}_N)$ with $\mathbf{P}_i=\mathbf{E}_i\mathbf{E}_i^\top$. By the selector-partition identity \eqref{eq:selector_partition}, $\sum_{i=1}^N \mathbf{P}_i = I_d$. Define $W := I_N - \alpha L$ and
\begin{align}
\mathbf{A}_{\mathrm{sys}} \;:=\; W\otimes I_d \;-\; \mathbf{P}.
\end{align}
If $0<\alpha<1/\lambda_N(L)$, then $\|\mathbf{A}_{\mathrm{sys}}\|_2<1$. Consequently, the mapping is a contraction in the Euclidean norm:
\begin{align}
\|\mathbf{A}_{\mathrm{sys}}\mathbf{x}\|_2 \le \sigma \|\mathbf{x}\|_2,\qquad \forall \mathbf{x}\in\mathbb{R}^{Nd},
\end{align}
with $\sigma:=\|\mathbf{A}_{\mathrm{sys}}\|_2\in[0,1)$.
\end{lemma}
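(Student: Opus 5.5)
The plan is to decouple the $Nd$-dimensional operator coordinatewise. We do not treat $\mathbf{A}_{\mathrm{sys}}$ as a whole. Since $\mathbf{P}_i=\mathbf{E}_i\mathbf{E}_i^\top$ is a diagonal $0/1$ matrix, $\mathbf{P}$ is diagonal. Its entry at agent $i$, coordinate $\ell$ is $1$ if $\ell\in\mathcal I_i$ and $0$ otherwise. Likewise $W\otimes I_d$ acts identically on each coordinate $\ell\in\{1,\dots,d\}$ across agents. After the perfect-shuffle permutation $\mathbb{R}^{Nd}\cong(\mathbb{R}^N)^d$, which is orthogonal and so preserves the spectral norm, $\mathbf{A}_{\mathrm{sys}}$ becomes $\mathrm{blkdiag}(A_1,\dots,A_d)$ with
\begin{align}
A_\ell \;=\; I_N-\alpha L - D_\ell,\qquad D_\ell:=e_{o(\ell)}e_{o(\ell)}^\top ,
\end{align}
where $o(\ell)$ is the unique owner of coordinate $\ell$. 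By \eqref{eq:selector_partition}, every coordinate has exactly one owner. Hence $\|\mathbf{A}_{\mathrm{sys}}\|_2=\max_\ell\|A_\ell\|_2$. Each $A_\ell$ is symmetric, so it suffices to show that the spectrum of $A_\ell$ lies in $(-1,1)$. There are only $N$ distinct matrices (one per possible owner), so the maximum is attained and is independent of $d$ and the $d_i$.

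Write $A_\ell=I_N-M_\ell$ with $M_\ell:=\alpha L+e_oe_o^\top$, an owner-grounded Laplacian. For the \textbf{lower bound} $\lambda_{\min}(M_\ell)>0$, note that $M_\ell\succeq0$ as a sum of PSD matrices. If $x^\top M_\ell x=0$, then $x^\top Lx=0$ forces $x\in\mathrm{span}(\mathbf 1_N)$ by connectivity (Assumption~\ref{ass:graph}), and $x_o=0$ then gives $x=0$. Thus $M_\ell\succ0$, and $\lambda_{\max}(A_\ell)<1$.

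The \textbf{upper bound} $\lambda_{\max}(M_\ell)<2$ is the step I expect to be delicate. Weyl's inequality gives only $\lambda_{\max}(M_\ell)\le\alpha\lambda_N(L)+1<2$, and this is strict precisely because $\alpha\lambda_N(L)<1$. That already suffices, so the main care is confirming that the hypothesis $\alpha<1/\lambda_N(L)$ is exactly what makes the inequality strict. As a side check, $x^\top M_\ell x\le \alpha\lambda_N\|x\|^2+x_o^2<2\|x\|^2$ for $x\neq0$. Then $\lambda_{\min}(A_\ell)>-1$, so $\|A_\ell\|_2=\max\{1-\lambda_{\min}(M_\ell),\,\lambda_{\max}(M_\ell)-1\}<1$.

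Finally, $\sigma:=\max_{o\in\{1,\dots,N\}}\|I_N-\alpha L-e_oe_o^\top\|_2<1$ is a maximum over finitely many quantities, each strictly below $1$. The contraction inequality $\|\mathbf{A}_{\mathrm{sys}}\mathbf{x}\|_2\le\sigma\|\mathbf{x}\|_2$ follows from the definition of the operator norm. The bound $\sigma\ge0$ is trivial.
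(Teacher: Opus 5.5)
Your proof is correct, but it takes a different route from the paper's proof of this lemma.

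The paper does not block-diagonalize. It bounds the Rayleigh quotient of the full symmetric operator $\mathbf A_{\mathrm{sys}}$ on $\mathbb R^{Nd}$. For the upper end, it notes that $\mathbf x^\top(W\otimes I_d)\mathbf x\le\|\mathbf x\|_2^2$, with equality only on the consensus subspace $\{\mathbf 1_N\otimes v\}$. On that subspace, $\mathbf x^\top\mathbf P\mathbf x=v^\top(\sum_i\mathbf P_i)v=\|v\|_2^2>0$, so the quotient stays below $1$. For the lower end, it uses $\mathbf P\preceq I_{Nd}$ to get $\lambda_{\min}(\mathbf A_{\mathrm{sys}})\ge-\alpha\lambda_N(L)$.

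Your perfect-shuffle reduction to the owner-grounded blocks $W-e_{o(\ell)}e_{o(\ell)}^\top$ is what the paper uses only afterwards, in the appendix on the coordinate-subspace spectrum, to justify Remark~\ref{rem:subspace}. Your positive-definiteness argument for $\alpha L+e_oe_o^\top$ is the same connectivity fact, stated in grounded-Laplacian form. Your Weyl step reproduces the same lower bound $-\alpha\lambda_N(L)$, blockwise.

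What each route buys:
\begin{itemize}
\item Your route gives the explicit formula for $\sigma$, and its independence of $d$ and the $d_i$, in a single pass. It also reduces computing $\sigma$ to $N\times N$ matrices.
\item The paper's route is more general. It uses only $0\preceq\mathbf P_i\preceq I_d$ and $\sum_i\mathbf P_i\succ0$, so it applies verbatim when $\mathbf P$ is not diagonal (e.g., orthonormal but non-coordinate $\mathbf E_i$). In that case your argument would first need a change of basis.
\end{itemize}

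One small precision: $\|\mathbf A_{\mathrm{sys}}\|_2$ equals the maximum of $\|W-e_oe_o^\top\|_2$ over owners that actually hold a coordinate ($d_o\ge1$, as in Remark~\ref{rem:subspace}). So there are at most $N$ distinct blocks, not exactly $N$. Maximizing over all $o\in\{1,\dots,N\}$ gives an upper bound, which still yields $\sigma<1$. It should not, however, replace the definition $\sigma:=\|\mathbf A_{\mathrm{sys}}\|_2$.
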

The proof is given in Appendix~\ref{app:estimator_gain_proofs}.

\begin{proposition}[Failure of additive-source averaging]
\label{prop:no_avg}
Under Assumption~\ref{ass:graph}, let \(W=I_N-\alpha L\) with \(\alpha>0\). Consider
\begin{align}
\mathbf s^{t+1}
&=
(W\otimes I_d)\mathbf s^t+\mathbf b(u), \notag\\
\mathbf b(u)
&=
\col(\mathbf E_1u_1,\ldots,\mathbf E_Nu_N).
\end{align}
If \(u\neq0\), this recursion admits no fixed point. If \(u=0\), its fixed-point set is the consensus subspace \(\{\mathbf1_N\otimes v:v\in\mathbb R^d\}\). Consequently, \(\mathbf1_N\otimes u\) is never an isolated fixed point of this pure-averaging recursion.
\end{proposition}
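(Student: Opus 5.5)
The plan is to left-multiply the fixed-point equation by the averaging operator $\mathbf 1_N^\top\otimes I_d$, which annihilates the diffusion term and isolates the aggregate source. A fixed point $\mathbf s$ must satisfy
\begin{align}
(\alpha L\otimes I_d)\,\mathbf s=\mathbf b(u).
\end{align}
Under Assumption~\ref{ass:graph} the Laplacian is symmetric with $L\mathbf 1_N=0$, hence $\mathbf 1_N^\top L=0$. Therefore $(\mathbf 1_N^\top\otimes I_d)(L\otimes I_d)=(\mathbf 1_N^\top L)\otimes I_d=0$. Applying $(\mathbf 1_N^\top\otimes I_d)$ to both sides gives the necessary condition
\begin{align}
0=\sum_{i=1}^N \mathbf E_i u_i = u,
\end{align}
where the last equality is the assembled-vector definition \eqref{eq:assembled_vector}. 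Consequently, if $u\neq0$ no fixed point exists. This is the key step: the raw additive sources have nonzero total mass, and pure averaging preserves the network sum.

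For $u=0$ we have $\mathbf b(u)=0$, so the fixed-point equation reduces to $(L\otimes I_d)\mathbf s=0$ (using $\alpha>0$). The null space of $L\otimes I_d$ is $\ker L\otimes\mathbb R^d$. Since $\mathcal G$ is connected, $\ker L=\operatorname{span}\{\mathbf 1_N\}$, i.e., $\lambda_2(L)>0$ as recalled in Lemma~\ref{lem:contraction}. Hence $\ker(L\otimes I_d)=\{\mathbf 1_N\otimes v: v\in\mathbb R^d\}$. The eigenstructure of a Kronecker product gives this directly: the eigenvalues are $\lambda_i(L)$, each with multiplicity $d$, and the eigenvectors are $q_i\otimes e_\ell$.

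For the final claim, there are two cases. If $u\neq0$, the vector $\mathbf 1_N\otimes u$ is not a fixed point at all. If $u=0$, then $\mathbf 1_N\otimes u=0$ lies in a $d$-dimensional affine (indeed linear) subspace of fixed points, with $d\ge1$. Every neighborhood of it therefore contains other fixed points $\mathbf 1_N\otimes \varepsilon v$, so it is not isolated.

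No step is a genuine obstacle. The only care needed is to justify $\ker(L\otimes I_d)=\ker L\otimes\mathbb R^d$ and to state explicitly that $\alpha>0$ is used to divide out $\alpha$.
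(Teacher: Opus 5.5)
Your proposal is correct and follows the same route as the paper: you write the fixed-point equation as $(\alpha L\otimes I_d)\mathbf s=\mathbf b(u)$, annihilate the left side with $\mathbf 1_N^\top\otimes I_d$ to force $u=\sum_i\mathbf E_iu_i=0$, and use connectivity ($\ker L=\operatorname{span}\{\mathbf 1_N\}$) to identify the consensus subspace when $u=0$. You additionally spell out the Kronecker kernel identity and the non-isolation consequence (which tacitly needs $d\ge1$); the paper leaves both implicit.
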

The proof is given in Appendix~\ref{app:estimator_gain_proofs}.
\begin{remark}[Coordinate-subspace spectrum]\label{rem:subspace}
Let \(\chi_i\in\mathbb R^N\) denote the \(i\)-th standard basis vector. Each coordinate subspace is invariant under \(\mathbf A_{\mathrm{sys}}\), where the operator acts as an owner-grounded matrix \(W-\chi_i \chi_i^\top\). Hence \(\sigma = \max_{i:d_i\ge1} \left\|W-\chi_i \chi_i^\top\right\|_2\). Thus, for fixed owning agents, the contraction factor is independent of the total and block dimensions, although each round still exchanges a \(d\)-dimensional replica. The spectral decomposition is given in Appendix~\ref{app:estimator_gain_proofs}.
\end{remark}
\subsection{Input-to-State Stability (ISS)}
We treat the outer-step change $\Delta u^{k}:=u^{k+1}-u^{k}$ as an external disturbance to the estimator across outer iterations. For fixed $u^{k}$, the unique fixed point of \eqref{eq:stacked_update_inner} is $\mathbf{s}^\star=\mathbf{1}_N\otimes u^{k}$, i.e., $s_i^\star=u^{k}$ for all $i$.
\begin{theorem}[Finite-$T$ contraction and outer-step ISS]\label{thm:iss_2ts}
Under the hypotheses of Lemma~\ref{lem:contraction}, let $\sigma:=\|\mathbf{A}_{\mathrm{sys}}\|_2\in[0,1)$. Then, for every integer $T\ge1$, the inner loop contracts as
\begin{equation}\label{eq:inner_contr_T}
\|\mathbf{e}^{k,T}\|_2 \le \sigma^{T}\,\|\mathbf{e}^{k,0}\|_2.
\end{equation}
Moreover, the outer-step propagation satisfies
\begin{equation}\label{eq:outer_iss}
\|\mathbf{e}^{k+1,0}\|_2
\le \sigma^{T}\,\|\mathbf{e}^{k,0}\|_2 + \sqrt{N}\,\|\Delta u^{k}\|_2.
\end{equation}
\end{theorem}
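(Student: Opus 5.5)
The argument combines the linear error recursion \eqref{eq:error_dynamics_inner} with Lemma~\ref{lem:contraction}, and then applies the triangle inequality to the warm-start relation \eqref{eq:error_outer}.

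First, I will confirm that the inner error obeys the homogeneous recursion. Within outer iteration $k$ the target $u^k$ is frozen. Subtracting $\mathbf 1_N\otimes u^k$ from \eqref{eq:stacked_update_inner} requires $(W\otimes I_d)(\mathbf 1_N\otimes u^k)=\mathbf 1_N\otimes u^k$, which holds because $L\mathbf 1_N=0$. The source term $\mathbf P(\mathbf 1_N\otimes u^k-\mathbf s^{k,t})$ equals $-\mathbf P\mathbf e^{k,t}$ exactly. Together these give $\mathbf e^{k,t+1}=\mathbf A_{\mathrm{sys}}\mathbf e^{k,t}$, i.e., \eqref{eq:error_dynamics_inner}.

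Next, for \eqref{eq:inner_contr_T}, I iterate to get $\mathbf e^{k,T}=\mathbf A_{\mathrm{sys}}^T\mathbf e^{k,0}$. Submultiplicativity of the induced $2$-norm then gives $\|\mathbf A_{\mathrm{sys}}^T\|_2\le\sigma^T$, or equivalently I apply the contraction inequality of Lemma~\ref{lem:contraction} $T$ times by induction on $t$. The case $\sigma=0$ needs no separate treatment.

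For \eqref{eq:outer_iss}, the warm start \eqref{eq:warmstart} gives $\mathbf e^{k+1,0}=\mathbf e^{k,T}-\mathbf 1_N\otimes\Delta u^k$, as recorded in \eqref{eq:error_outer}. Taking norms and applying the triangle inequality yields $\|\mathbf e^{k+1,0}\|_2\le\|\mathbf e^{k,T}\|_2+\|\mathbf 1_N\otimes\Delta u^k\|_2$. The Kronecker norm identity $\|\mathbf 1_N\otimes v\|_2=\|\mathbf 1_N\|_2\|v\|_2=\sqrt N\|v\|_2$ then handles the second term. Substituting \eqref{eq:inner_contr_T} for the first term completes the bound.

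No step is a real obstacle once Lemma~\ref{lem:contraction} is assumed. The only point needing care is the exact cancellation in the error recursion: the innovation only drives the error to zero because $\mathbf E_i^\top u^k=u_i^k$ (from \eqref{eq:assembled_vector}), so that $\mathbf P_i(u^k-s_i)=\mathbf E_i(u_i^k-\mathbf E_i^\top s_i)$. I will state this identity explicitly.
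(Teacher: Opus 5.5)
Your proposal is correct and follows the paper's proof essentially step for step. You iterate \eqref{eq:error_dynamics_inner} with $\|\mathbf A_{\mathrm{sys}}\|_2=\sigma$ to obtain \eqref{eq:inner_contr_T}, then expand the warm start, apply the triangle inequality, and use $\|\mathbf 1_N\otimes\Delta u^k\|_2=\sqrt N\|\Delta u^k\|_2$; your explicit derivation of the homogeneous error recursion (via $W\mathbf 1_N=\mathbf 1_N$ and $\mathbf E_i^\top u^k=u_i^k$) is a correct check of a step the paper takes directly from \eqref{eq:stacked_update_inner}.
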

The proof is given in Appendix~\ref{app:estimator_gain_proofs}.
\subsection{Self-Consistent Gain Bound for the Decentralized Interconnection}\label{subsec:small_gain}
Fix an integer \(T\ge1\) and a stepsize \(\eta>0\). Initialize
\(u^0\in\mathcal U\) and \(\mathbf s^{0,0}\in\mathbb R^{Nd}\).
\paragraph{Projected estimate (for gradient evaluation).}
Define
\begin{equation}\label{eq:proj_estimate}
\bar s_i^{k,T} := \proj_{\mathcal U}\!\big(s_i^{k,T}\big),
\end{equation}
where $\proj_{\mathcal U}$ acts blockwise.
\paragraph{Decentralized outer update.}
Each agent updates only its local block using its assembled copy:
\begin{equation}\label{eq:outer_update_local}
u_i^{k+1}
=
\proj_{\mathcal{U}_i}\!\Big(
u_i^{k}-\eta\, g_i\!\left(\bar s_i^{k,T}\right)
\Big).
\end{equation}
Here \(g_i\) denotes the local block map specified in the problem formulation and satisfying Assumption~\ref{ass:smooth_c}; \eqref{eq:coord_map_local} gives the penalized-gradient instance. The assembled vector evolves as
\begin{equation}\label{eq:outer_update_assembled}
\tilde u^{k+1} := u^{k} - \eta \sum_{i=1}^N \mathbf{E}_i\, g_i\!\left(\bar s_i^{k,T}\right),
\qquad
u^{k+1} := \proj_{\mathcal{U}}(\tilde u^{k+1}),
\end{equation}
where $\proj_{\mathcal U}$ acts blockwise. Under Assumption~\ref{ass:smooth_c}, each $g_i$ is $L_i$-Lipschitz with $L_{\mathrm{blk}}:=\max_i L_i$. Since $u_i^k\in\mathcal U_i$ for all $i$ (by \eqref{eq:outer_update_local}) and $\mathcal U=\prod_i \mathcal U_i$, we have $u^k\in\mathcal U$.
\paragraph{Outer-loop-to-estimator gain.}
Define \(\hat g^{k}:=\sum_{i=1}^N \mathbf{E}_i\, g_i(\bar s_i^{k,T})\). Orthogonality of the selector partition, Lipschitz continuity, and nonexpansiveness of projection give
\begin{align}
\|\hat g^{k}-g(u^{k})\|_2
&\le L_{\mathrm{blk}}\|\mathbf e^{k,T}\|_2,
\notag\\
\|\Delta u^{k}\|_2
&\le \eta\big(\|g(u^{k})\|_2 + L_{\mathrm{blk}}\|\mathbf{e}^{k,T}\|_2\big).
\label{eq:outer_gain_bounds}
\end{align}
The derivation is given in Appendix~\ref{app:estimator_gain_proofs}.
\begin{theorem}
\label{thm:small_gain_2ts}
\textnormal{\textit{(Self-consistent estimator--controller gain\\
bound)}}\par\nobreak
Under the hypotheses of Lemma~\ref{lem:contraction} and Assumption~\ref{ass:smooth_c}, let \(G_{\max}:=\max_{u\in\mathcal U}\|g(u)\|_2\). If
\begin{equation}\label{eq:small_gain_2ts}
q_T
:=
\sigma^T\bigl(1+\sqrt N\,\eta L_{\mathrm{blk}}\bigr)
<1,
\end{equation}
then the estimator recursion satisfies
\begin{equation}
\|\mathbf e^{k+1,0}\|_2
\le
q_T\|\mathbf e^{k,0}\|_2
+
\sqrt N\,\eta G_{\max}.
\end{equation}
Consequently,
\begin{equation}
\limsup_{k\to\infty}\|\mathbf e^{k,0}\|_2
\le
\frac{\sqrt N\,\eta G_{\max}}{1-q_T}.
\end{equation}
Moreover, 
\begin{equation}\label{eq:small_gain_terminal_limsup}
\limsup_{k\to\infty}
\|\mathbf e^{k,T}\|_2
\le
\frac{
\sqrt N\,\eta G_{\max}\sigma^T
}{
1-q_T
}.
\end{equation}
\end{theorem}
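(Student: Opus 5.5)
The argument chains the outer-step ISS inequality \eqref{eq:outer_iss} of Theorem~\ref{thm:iss_2ts} with the outer-loop gain bound \eqref{eq:outer_gain_bounds}, and then unrolls a scalar linear recursion. Since \(\mathcal U\) is compact, the bound \(\|g(u^k)\|_2\le G_{\max}\) holds at every iterate. This is because \(u^k\in\mathcal U\) for all \(k\), as noted after \eqref{eq:outer_update_assembled}; the base case is \(u^0\in\mathcal U\).

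\emph{Step 1 (one-step recursion).} From \eqref{eq:outer_iss},
\begin{align}
\|\mathbf e^{k+1,0}\|_2\le\sigma^T\|\mathbf e^{k,0}\|_2+\sqrt N\|\Delta u^k\|_2 .
\end{align}
Substituting the second line of \eqref{eq:outer_gain_bounds} gives
\begin{align}
\|\Delta u^k\|_2\le \eta G_{\max}+\eta L_{\mathrm{blk}}\|\mathbf e^{k,T}\|_2 .
\end{align}
Then apply the inner contraction \eqref{eq:inner_contr_T}, \(\|\mathbf e^{k,T}\|_2\le\sigma^T\|\mathbf e^{k,0}\|_2\). Collecting terms yields
\begin{align}
\|\mathbf e^{k+1,0}\|_2\le\sigma^T\bigl(1+\sqrt N\eta L_{\mathrm{blk}}\bigr)\|\mathbf e^{k,0}\|_2+\sqrt N\eta G_{\max},
\end{align}
which is the claimed recursion with coefficient \(q_T\).

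The one point needing care is the gain bound itself. It uses \(\|\bar s_i^{k,T}-u^k\|_2\le\|s_i^{k,T}-u^k\|_2\), which holds because projection onto \(\mathcal U\) is nonexpansive and \(u^k\in\mathcal U\) is a fixed point of the projection. Orthogonality of the selectors then gives
\begin{align}
\|\hat g^k-g(u^k)\|_2^2=\sum_i\|g_i(\bar s_i^{k,T})-g_i(u^k)\|_2^2\le L_{\mathrm{blk}}^2\sum_i\|e_i^{k,T}\|_2^2 .
\end{align}
Here the Lipschitz bound is applied only at points of \(\mathcal U\), which is exactly why the projected estimate is used. Since this bound is already stated in the paper, I would simply invoke it.

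\emph{Step 2 (limsup).} Write \(a_k:=\|\mathbf e^{k,0}\|_2\) and \(c:=\sqrt N\eta G_{\max}\). Unrolling gives
\begin{align}
a_k\le q_T^k a_0+c\sum_{j<k}q_T^j\le q_T^k a_0+\frac{c}{1-q_T}.
\end{align}
Because \(q_T<1\), the first term vanishes, which gives the first limsup bound.

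\emph{Step 3 (terminal error).} Apply \(\|\mathbf e^{k,T}\|_2\le\sigma^T\|\mathbf e^{k,0}\|_2\) once more and take the limsup; the bound \eqref{eq:small_gain_terminal_limsup} follows.

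I expect no step to be a real obstacle. The only subtlety is confirming that every quantity fed into \eqref{eq:outer_gain_bounds} is evaluated at points of \(\mathcal U\), so that the Lipschitz and \(G_{\max}\) bounds are legitimate at every \(k\).
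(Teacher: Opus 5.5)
Your proposal is correct and follows the paper's proof essentially step by step. You chain the ISS inequality \eqref{eq:outer_iss} with the gain bound \eqref{eq:outer_gain_bounds} (using projection nonexpansiveness, selector orthogonality, and \(u^k\in\mathcal U\)) and the inner contraction to obtain the \(q_T\)-recursion, unroll it to get the first limsup, and then apply \(\|\mathbf e^{k,T}\|_2\le\sigma^T\|\mathbf e^{k,0}\|_2\) once more for the terminal bound.
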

The proof is given in Appendix~\ref{app:estimator_gain_proofs}.
The gain result requires only Lipschitz continuity of \(g\) on the compact set \(\mathcal U\), without potentiality or monotonicity. Outer-loop convergence requires additional structure and is treated in Section~\ref{sec:kl} for the potential case.

\begin{corollary}[Tracking bound under bounded variation]\label{cor:tracking} 
Under the hypotheses of Lemma~\ref{lem:contraction}, let $\sigma:=\|\mathbf A_{\mathrm{sys}}\|_2\in[0,1)$, fix an integer $T\ge1$, and assume that $\|\Delta u^k\|_2\le \bar\Delta$ for all $k$. Then the estimator error is ultimately bounded as \(\limsup_{k\to\infty}\|\mathbf{e}^{k,0}\|_2 \le \frac{\sqrt{N}}{1-\sigma^{T}}\bar\Delta\), \(\limsup_{k\to\infty}\|\mathbf{e}^{k,T}\|_2 \le \frac{\sqrt{N}\sigma^{T}}{1-\sigma^{T}}\bar\Delta\).
\end{corollary}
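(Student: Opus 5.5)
The plan is to iterate the outer-step ISS bound of Theorem~\ref{thm:iss_2ts}. It is a scalar linear recursion with contraction factor \(\sigma^T<1\) and bounded forcing. Set \(a_k:=\|\mathbf e^{k,0}\|_2\). Inserting \(\|\Delta u^k\|_2\le\bar\Delta\) into \eqref{eq:outer_iss} gives
\begin{equation*}
a_{k+1}\le \sigma^T a_k+\sqrt N\,\bar\Delta,\qquad k\ge0 .
\end{equation*}
No structure of the outer map is needed here: the corollary only assumes the variation bound. So neither Lipschitz continuity of \(g\) nor the gain condition \eqref{eq:small_gain_2ts} is required.

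First I will unroll this inequality by induction on \(k\). This yields
\begin{equation*}
a_k\le \sigma^{Tk}a_0+\sqrt N\,\bar\Delta\sum_{j=0}^{k-1}\sigma^{Tj}\le \sigma^{Tk}a_0+\frac{\sqrt N\,\bar\Delta}{1-\sigma^T}.
\end{equation*}
Since \(\sigma\in[0,1)\) and \(T\ge1\), we have \(\sigma^T<1\), so \(\sigma^{Tk}a_0\to0\). Taking \(\limsup_{k\to\infty}\) then gives the first claimed bound.

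For the terminal error, I will apply the inner contraction \eqref{eq:inner_contr_T}, which gives \(\|\mathbf e^{k,T}\|_2\le\sigma^T\|\mathbf e^{k,0}\|_2\) for every \(k\). Taking \(\limsup\) on both sides and using that multiplication by the nonnegative constant \(\sigma^T\) commutes with \(\limsup\) yields \(\limsup_k\|\mathbf e^{k,T}\|_2\le \sigma^T\sqrt N\bar\Delta/(1-\sigma^T)\).

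There is no substantive obstacle. The only care needed is the degenerate case \(\sigma=0\), where the convention \(\sigma^0=1\) makes the geometric sum and both bounds hold trivially. The statement should also be read as valid for arbitrary initialization \(\mathbf s^{0,0}\), which the \(\sigma^{Tk}a_0\) term handles.
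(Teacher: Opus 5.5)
Your proof is correct and matches the paper's argument: you insert $\|\Delta u^k\|_2\le\bar\Delta$ into the outer-step ISS bound \eqref{eq:outer_iss}, unroll the scalar recursion $x_{k+1}\le\sigma^T x_k+\sqrt N\,\bar\Delta$ into a geometric sum, take the limit superior, and then pass to the terminal error via $\|\mathbf e^{k,T}\|_2\le\sigma^T\|\mathbf e^{k,0}\|_2$. Your remarks on the degenerate case $\sigma=0$ and on arbitrary initialization are harmless additions that the paper's version does not need.
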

The proof is given in Appendix~\ref{app:estimator_gain_proofs}.
\section{KL-Based Convergence}
\label{sec:kl}
We now prove finite-length convergence of the joint iterate $\xi^k:=(u^k,\mathbf s^{k,0})$ when D+LI couples to the projected-gradient outer loop \eqref{eq:outer_update_local}--\eqref{eq:outer_update_assembled}.

\begin{assumption}[Potential and KL regularity]
\label{ass:kl}
There exists a continuously differentiable semi-algebraic function \(F:\mathbb R^d\to\mathbb R\) such that
\(\nabla F(u)=g(u)\) for every \(u\in\mathcal U\).
Moreover, \(\mathcal U\) is semi-algebraic.
\end{assumption}

\paragraph{Lyapunov function.}
Recall that \(\mathbf H=\mathbf 1_N\otimes I_d\), and define
\begin{align}
r^k
&:=\mathbf s^{k,0}-\mathbf H u^k=\mathbf e^{k,0},
\notag\\
\Delta u^k
&:=u^{k+1}-u^k,
\notag\\
\Delta\mathbf s^k
&:=\mathbf s^{k+1,0}-\mathbf s^{k,0}.
\end{align}

By the warm start \eqref{eq:warmstart}, \(\mathbf s^{k+1,0}=\mathbf s^{k,T}\), hence
\begin{align}
\Delta \mathbf s^k = \mathbf s^{k,T}-\mathbf s^{k,0}
= \mathbf e^{k,T}-\mathbf e^{k,0}.
\end{align}
Let \(A_T:=\mathbf A_{\mathrm{sys}}^{\,T}\) and \(a:=\sigma^T<1\). Since \(\mathbf e^{k,T}=A_T\mathbf e^{k,0}\) and \(\Delta\mathbf s^k=(A_T-I)\mathbf e^{k,0}\), symmetry of \(A_T\) and the warm-start identity give
\begin{align}
\|\mathbf e^{k,T}\|_2
&\le \frac{a}{1-a}\|\Delta\mathbf s^k\|_2,
\label{eq:eT_deltas_bound}\\
r^{k+1}
&=\mathbf e^{k,T}-\mathbf H\Delta u^k .
\notag
\end{align}
Define
\begin{equation}\label{eq:Psi_conf}
\Psi(\xi) := F(u) 
+ \frac{\kappa_s}{2}\|\mathbf{s} - \mathbf{1}_N\otimes u\|_2^2 
+ \iota_{\mathcal{U}}(u),
\end{equation}
where $\kappa_s>0$ weights the assembly-tracking penalty and $\iota_{\mathcal U}$ is the indicator of the feasible set. Since $F$ is semi-algebraic and the remaining terms are polynomial or indicator functions of semi-algebraic sets, $\Psi$ is semi-algebraic and satisfies the Kurdyka--\L{}ojasiewicz (KL) property. Define the gradient mismatch \(\delta^k:=\hat g^k-\nabla F(u^k)\). From Section~\ref{subsec:small_gain}, \(\|\delta^k\|_2\le L_{\mathrm{blk}}\|\mathbf e^{k,T}\|_2\).

\begin{theorem}[Perturbed one-step descent]
\label{thm:suffdec_conf}
Under the hypotheses of Lemma~\ref{lem:contraction} and Assumptions~\ref{ass:smooth_c} and~\ref{ass:kl}, fix
\begin{align}
T\ge1,\quad
\eta>0,\quad
\varepsilon>0,\quad
\tau>0,\quad
\kappa_s>0,
\end{align}
where \(T\) is an integer. Define
\begin{align}
C_e&:=\frac{\varepsilon}{2}L_{\mathrm{blk}}^2,\quad
C_u:=\frac{1}{2\varepsilon},\notag\\
\tilde c_u
&:=\eta^{-1}-\frac{L_g}{2}-C_u
-\frac{\kappa_s}{2}(1+\tau^{-1})N,\notag\\
c_s
&:=\frac{\kappa_s}{2}
\frac{1-\sigma^T}{1+\sigma^T}, \quad
\tilde C_e :=C_e+\frac{\kappa_s\tau}{2}.
\end{align}

If \(\tilde c_u>0\), then one outer iteration yields
\begin{align}
\Psi(\xi^{k+1})
\le{}&
\Psi(\xi^k)
-\tilde c_u\|u^{k+1}-u^k\|_2^2 \notag\\
&-c_s\|\mathbf{s}^{k+1,0}-\mathbf{s}^{k,0}\|_2^2
+\tilde C_e\|\mathbf e^{k,T}\|_2^2 .
\end{align}
\end{theorem}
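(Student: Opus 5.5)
The plan is to split \(\Psi\) into its objective part \(F(u)\) and its tracking part \(\frac{\kappa_s}{2}\|\mathbf s-\mathbf 1_N\otimes u\|_2^2=\frac{\kappa_s}{2}\|r\|_2^2\), bound the one-step change of each, and add the two bounds. The indicator term is zero at both \(\xi^k\) and \(\xi^{k+1}\), because \(u^k,u^{k+1}\in\mathcal U\) by \eqref{eq:outer_update_local}. Throughout, \(\Delta u^k=u^{k+1}-u^k\) and \(\delta^k=\hat g^k-\nabla F(u^k)\), with \(\|\delta^k\|_2\le L_{\mathrm{blk}}\|\mathbf e^{k,T}\|_2\) as recorded before the statement.

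\emph{Objective part.} Since \(\nabla F=g\) is \(L_g\)-Lipschitz on the convex set \(\mathcal U\), the descent lemma gives
\[
F(u^{k+1})\le F(u^k)+\langle\nabla F(u^k),\Delta u^k\rangle+\tfrac{L_g}{2}\|\Delta u^k\|_2^2 .
\]
By \eqref{eq:outer_update_assembled}, \(u^{k+1}=\proj_{\mathcal U}(u^k-\eta\hat g^k)\). The variational characterization of the projection, tested at the feasible point \(u^k\), yields \(\langle\hat g^k,\Delta u^k\rangle\le-\eta^{-1}\|\Delta u^k\|_2^2\). Writing \(\nabla F(u^k)=\hat g^k-\delta^k\) and applying Young's inequality with parameter \(\varepsilon\),
\[
-\langle\delta^k,\Delta u^k\rangle\le\tfrac{\varepsilon}{2}\|\delta^k\|_2^2+\tfrac{1}{2\varepsilon}\|\Delta u^k\|_2^2 .
\]
Combining these,
\[
F(u^{k+1})\le F(u^k)-\bigl(\eta^{-1}-\tfrac{L_g}{2}-C_u\bigr)\|\Delta u^k\|_2^2+C_e\|\mathbf e^{k,T}\|_2^2 .
\]

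\emph{Tracking part.} Use \(r^{k+1}=\mathbf e^{k,T}-\mathbf H\Delta u^k\), \(\|\mathbf H\Delta u^k\|_2^2=N\|\Delta u^k\|_2^2\), and Young's inequality with parameter \(\tau\):
\[
\|r^{k+1}\|_2^2\le(1+\tau)\|\mathbf e^{k,T}\|_2^2+(1+\tau^{-1})N\|\Delta u^k\|_2^2 .
\]
Multiplying by \(\kappa_s/2\), the \(\Delta u^k\)-term supplies the last subtraction in \(\tilde c_u\), and the \(\tau\)-part of the \(\mathbf e^{k,T}\)-term supplies \(\kappa_s\tau/2\) in \(\tilde C_e\). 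What remains to show is
\[
\tfrac{\kappa_s}{2}\bigl(\|\mathbf e^{k,T}\|_2^2-\|r^k\|_2^2\bigr)\le -c_s\|\Delta\mathbf s^k\|_2^2 .
\]

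\emph{Key spectral step (main obstacle).} The remaining inequality is where the nonstandard factor \((1-\sigma^T)/(1+\sigma^T)\) must come from. Since \(W\) and \(\mathbf P\) are symmetric, \(\mathbf A_{\mathrm{sys}}\) is symmetric, so \(A_T=\mathbf A_{\mathrm{sys}}^T\) is symmetric with eigenvalues \(\mu\in[-a,a]\), where \(a=\sigma^T\), by Lemma~\ref{lem:contraction}. With \(\mathbf e^{k,T}=A_Tr^k\) and \(\Delta\mathbf s^k=(A_T-I)r^k\), diagonalize \(A_T\); it then suffices to verify the scalar inequality
\[
\mu^2-1\le-\tfrac{1-a}{1+a}(1-\mu)^2\qquad\text{for all }\mu\in[-a,a].
\]
Dividing by \(1-\mu>0\), this is equivalent to \((1+\mu)(1+a)\ge(1-a)(1-\mu)\), which simplifies to \(2(a+\mu)\ge0\). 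This holds exactly because \(\mu\ge-a\), so the symmetry of \(A_T\) and the lower spectral bound are what make the argument work.

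Summing the objective and tracking bounds gives the stated inequality with coefficients \(\tilde c_u\), \(c_s\), and \(\tilde C_e=C_e+\kappa_s\tau/2\). The hypothesis \(\tilde c_u>0\) is needed only so that the \(\Delta u^k\)-term is a genuine decrease in the subsequent KL argument.
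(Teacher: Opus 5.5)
Your proposal is correct and follows essentially the same route as the paper. It splits $\Psi$ into the $F$-part (projection inequality tested at $u^k$, descent lemma, Young with $\varepsilon$) and the tracking part (expansion of $r^{k+1}=\mathbf e^{k,T}-\mathbf H\Delta u^k$ with Young in $\tau$), and it closes with the same spectral bound $I-A_T^2\succeq\frac{1-\sigma^T}{1+\sigma^T}(I-A_T)^2$ derived from $\operatorname{spec}(A_T)\subset[-\sigma^T,\sigma^T]$, where your reduction to $\mu\ge-\sigma^T$ just spells out the paper's eigenvalue inequality.
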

The proof is given in Appendix~\ref{app:descent_proofs}.

\begin{corollary}[Pointwise sufficient decrease]\label{cor:pointwise_descent}
Under the hypotheses and notation of Theorem~\ref{thm:suffdec_conf}, suppose that
\begin{equation}\label{eq:pointwise_absorb}
c_s^{+}(T)
:=
c_s
-
\tilde C_e\left(\frac{\sigma^T}{1-\sigma^T}\right)^2
>0 .
\end{equation}
Then
\begin{align}
\Psi(\xi^{k+1})
\le
\Psi(\xi^k)
-
\tilde c_u\|\Delta u^k\|_2^2
-
c_s^{+}(T)\|\Delta\mathbf s^k\|_2^2 .
\end{align}
\end{corollary}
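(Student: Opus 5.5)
The plan is to start from the perturbed one-step descent inequality of Theorem~\ref{thm:suffdec_conf} and absorb its only nonnegative term, \(\tilde C_e\|\mathbf e^{k,T}\|_2^2\), into the \(\|\Delta\mathbf s^k\|_2^2\) decrease term. The tool for this is the warm-start bound \eqref{eq:eT_deltas_bound}, which controls the terminal estimator error by the increment of the warm-started replica. Since all the constants are fixed, the argument is purely algebraic once that bound is in hand.

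First, I would confirm \eqref{eq:eT_deltas_bound} with \(a=\sigma^T\), since it carries the whole argument. By the warm-start identity, \(\Delta\mathbf s^k=(A_T-I)\mathbf e^{k,0}\). Because \(A_T=\mathbf A_{\mathrm{sys}}^T\) is symmetric with \(\|A_T\|_2\le a<1\), every eigenvalue \(\mu\) of \(A_T\) satisfies \(|\mu|\le a\). Hence \(|1-\mu|\ge 1-a\), and \(\|\Delta\mathbf s^k\|_2\ge(1-a)\|\mathbf e^{k,0}\|_2\). Combining this with \(\|\mathbf e^{k,T}\|_2\le a\|\mathbf e^{k,0}\|_2\) from \eqref{eq:inner_contr_T} gives
\[
\|\mathbf e^{k,T}\|_2\le \frac{\sigma^T}{1-\sigma^T}\|\Delta\mathbf s^k\|_2 .
\]
Squaring and multiplying by \(\tilde C_e\ge0\) yields \(\tilde C_e\|\mathbf e^{k,T}\|_2^2\le \tilde C_e\bigl(\sigma^T/(1-\sigma^T)\bigr)^2\|\Delta\mathbf s^k\|_2^2\).

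Second, I would substitute this bound into the inequality of Theorem~\ref{thm:suffdec_conf}, whose hypothesis \(\tilde c_u>0\) is inherited. Collecting the \(\|\Delta\mathbf s^k\|_2^2\) terms produces the coefficient \(-c_s+\tilde C_e(\sigma^T/(1-\sigma^T))^2=-c_s^{+}(T)\). Using \(\mathbf s^{k+1,0}-\mathbf s^{k,0}=\Delta\mathbf s^k\) and \(u^{k+1}-u^k=\Delta u^k\), this is exactly the claimed inequality.

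The assumption \eqref{eq:pointwise_absorb} guarantees that the combined coefficient is strictly negative, so the result is a genuine sufficient decrease; this positivity is not needed for the inequality itself. The only point requiring care is the spectral step: the lower bound on \(\|(A_T-I)x\|_2\) relies on \(A_T\) being symmetric. I would therefore check explicitly that \(\mathbf A_{\mathrm{sys}}=W\otimes I_d-\mathbf P\) is symmetric. This holds because \(W\) is symmetric and \(\mathbf P\) is block diagonal with symmetric projector blocks \(\mathbf P_i=\mathbf E_i\mathbf E_i^\top\).
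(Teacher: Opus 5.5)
Your proposal is correct and matches the paper's proof. Both square the warm-start bound \eqref{eq:eT_deltas_bound}, scale it by $\tilde C_e$, substitute it into Theorem~\ref{thm:suffdec_conf}, and collect the $\|\Delta\mathbf s^k\|_2^2$ coefficients into $-c_s^{+}(T)$. Your re-derivation of \eqref{eq:eT_deltas_bound} via $\|(I-A_T)x\|_2\ge(1-\sigma^T)\|x\|_2$ differs from the paper's resolvent bound on $A_T(I-A_T)^{-1}$, and it does not actually need the symmetry you flag: the reverse triangle inequality together with $\|A_T\|_2\le\sigma^T$ already gives it.
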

The proof is given in Appendix~\ref{app:descent_proofs}.

For fixed parameters satisfying \(\tilde c_u>0\), \(c_s^{+}(T)\to\kappa_s/2>0\) as \(T\to\infty\). Thus sufficiently many inner rounds ensure pointwise sufficient decrease. Hereafter, \(\partial\Psi\) denotes the limiting subdifferential. For \(\xi=(u,\mathbf s)\) with \(u\in\mathcal U\),
\begin{align}
\partial\Psi(\xi)
=
\begin{bmatrix}
\nabla F(u)
+\kappa_s\mathbf H^\top(\mathbf H u-\mathbf s)
+N_{\mathcal U}(u)
\\[1mm]
\kappa_s(\mathbf s-\mathbf H u)
\end{bmatrix}.
\label{eq:Psi_subdiff}
\end{align}

\begin{lemma}[Relative error]
\label{lem:relerr_conf}
Under the hypotheses of Lemma~\ref{lem:contraction} and Assumptions~\ref{ass:smooth_c} and~\ref{ass:kl}, fix an integer \(T\ge1\) and \(\eta>0\). Then there exists \(C>0\) such that
\begin{align}
\dist(0,\partial\Psi(\xi^{k+1}))
\le
C\Big(
\|u^{k+1}-u^k\|_2
+
\|\mathbf{s}^{k+1,0}-\mathbf{s}^{k,0}\|_2
\Big).
\end{align}
\end{lemma}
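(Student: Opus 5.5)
The plan is to build an explicit element of $\partial\Psi(\xi^{k+1})$ from the optimality condition of the projected outer update. Then we bound its norm by the successive differences, using the estimator structure to control the remaining terms.

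\textbf{Step 1 (normal-cone element).} From \eqref{eq:outer_update_assembled}, $u^{k+1}=\proj_{\mathcal U}(u^k-\eta\hat g^k)$. The projection optimality condition gives
\begin{align}
\nu^{k+1}:=\eta^{-1}(u^k-u^{k+1})-\hat g^k\in N_{\mathcal U}(u^{k+1}).
\end{align}
Using \eqref{eq:Psi_subdiff}, we define the candidate subgradient
\begin{align}
w^{k+1}:=\begin{bmatrix}\nabla F(u^{k+1})+\kappa_s\mathbf H^\top(\mathbf H u^{k+1}-\mathbf s^{k+1,0})+\nu^{k+1}\\ \kappa_s(\mathbf s^{k+1,0}-\mathbf H u^{k+1})\end{bmatrix}\in\partial\Psi(\xi^{k+1}),
\end{align}
which yields $\dist(0,\partial\Psi(\xi^{k+1}))\le\|w^{k+1}\|_2$.

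\textbf{Step 2 (first block).} Write $\nabla F(u^{k+1})+\nu^{k+1}=\nabla F(u^{k+1})-\nabla F(u^k)-\delta^k+\eta^{-1}(u^k-u^{k+1})$. Since $\nabla F=g$ on $\mathcal U$ and $\|\delta^k\|_2\le L_{\mathrm{blk}}\|\mathbf e^{k,T}\|_2$, this is bounded by
\begin{align}
(L_g+\eta^{-1})\|\Delta u^k\|_2+L_{\mathrm{blk}}\|\mathbf e^{k,T}\|_2 .
\end{align}
Two terms remain. First, $\mathbf H^\top r^{k+1}$ has norm at most $\sqrt N\|r^{k+1}\|_2$, because $\|\mathbf H\|_2=\sqrt N$. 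Second, in the other block, $\kappa_s\|r^{k+1}\|_2$ appears. By \eqref{eq:eT_deltas_bound}, $\|r^{k+1}\|_2\le\|\mathbf e^{k,T}\|_2+\sqrt N\|\Delta u^k\|_2$.

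\textbf{Step 3 (the key step).} The remaining task is to express $\|\mathbf e^{k,T}\|_2$ through $\|\Delta\mathbf s^k\|_2$, which \eqref{eq:eT_deltas_bound} does: $\|\mathbf e^{k,T}\|_2\le\frac{a}{1-a}\|\Delta\mathbf s^k\|_2$ with $a=\sigma^T<1$. I would verify this inequality carefully, since it is where the structure is used. Because $\mathbf A_{\mathrm{sys}}$ is symmetric with $\|\mathbf A_{\mathrm{sys}}\|_2=\sigma$, the eigenvalues of $A_T$ lie in $[-a,a]$, so $\|(A_T-I)x\|_2\ge(1-a)\|x\|_2$. Hence
\begin{align}
\|\mathbf e^{k,0}\|_2\le(1-a)^{-1}\|\Delta\mathbf s^k\|_2
\end{align}
and $\|\mathbf e^{k,T}\|_2\le a\|\mathbf e^{k,0}\|_2$.

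\textbf{Step 4 (combine).} Combining Steps 1--3 gives
\begin{align}
C=\max\Big\{L_g+\eta^{-1}+\kappa_s N+\kappa_s\sqrt N,\;\big(L_{\mathrm{blk}}+\kappa_s\sqrt N+\kappa_s\big)\tfrac{a}{1-a}\Big\},
\end{align}
up to harmless slack. This $C$ is finite and positive and depends only on $T,\eta,\kappa_s,N,L_g,L_{\mathrm{blk}},\sigma$.

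I expect the main obstacle to be bookkeeping rather than substance. The subdifferential sum rule in \eqref{eq:Psi_subdiff} requires $u^{k+1}\in\mathcal U$, which holds by construction. One must also use $\nabla F=g$ only at points of $\mathcal U$, and both $u^k$ and $u^{k+1}$ lie in $\mathcal U$. The gradient mismatch is measured at $u^k$, while $\Psi$ is evaluated at $u^{k+1}$; Step 2 handles this through the Lipschitz constant $L_g$ on $\mathcal U$.
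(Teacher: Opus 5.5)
Your proposal is correct and follows the paper's proof essentially step for step. It uses the same normal-cone element $\eta^{-1}(u^k-u^{k+1})-\hat g^k\in N_{\mathcal U}(u^{k+1})$, the same subgradient built from \eqref{eq:Psi_subdiff}, the same triangle-inequality bounds via $r^{k+1}=\mathbf e^{k,T}-\mathbf H\Delta u^k$, and the same use of \eqref{eq:eT_deltas_bound} to trade $\|\mathbf e^{k,T}\|_2$ for $\|\Delta\mathbf s^k\|_2$. Your explicit constant $C$ and your check of \eqref{eq:eT_deltas_bound} through $\|(A_T-I)x\|_2\ge(1-a)\|x\|_2$, rather than the paper's spectral bound on $A_T(I-A_T)^{-1}$, only make the paper's ``absorbing constants'' step more explicit.
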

The proof is given in Appendix~\ref{app:relerr_proof}.

\begin{theorem}[KL convergence in the potential case]
\label{thm:kl_conf}
Fix an integer \(T\ge1\) and parameters \(\eta>0, \varepsilon>0, \tau>0, \kappa_s>0.\)
Under the hypotheses of Lemma~\ref{lem:contraction} and Assumptions~\ref{ass:smooth_c} and~\ref{ass:kl}, suppose that
\(\tilde c_u>0, \quad c_s^{+}(T)>0,\) where the constants are defined in Theorem~\ref{thm:suffdec_conf} and \eqref{eq:pointwise_absorb}. Then the sequence \(\{\xi^k\}=\{(u^k,\mathbf s^{k,0})\}\) satisfies:
\begin{enumerate}
\setlength{\topsep}{1pt}
\setlength{\itemsep}{0pt}
\setlength{\parsep}{0pt}
\setlength{\partopsep}{0pt}
\item \textbf{Finite length:}
\(\sum_{k=0}^\infty\|\xi^{k+1}-\xi^k\|_2<\infty\).
\item \textbf{Convergence:}
\(\xi^k\to\xi^\star=(u^\star,\mathbf s^\star)\),
a critical point of \(\Psi\).
\item \textbf{Stationarity:}
\(0\in g(u^\star)+N_{\mathcal U}(u^\star)\).
\item \textbf{Exact assembly agreement:}
\(\mathbf s^\star=\mathbf 1_N\otimes u^\star\).
\end{enumerate}
\end{theorem}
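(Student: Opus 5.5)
The plan is to follow the standard Attouch--Bolte--Svaiter (ABS) template for KL descent methods. I would apply it to $\Psi$ from \eqref{eq:Psi_conf} along the joint iterate $\xi^k=(u^k,\mathbf s^{k,0})$. The ingredients are already stated.
\begin{itemize}
\item Sufficient decrease comes from Corollary~\ref{cor:pointwise_descent}. Setting $c:=\min\{\tilde c_u,c_s^+(T)\}>0$ and using $\|\xi^{k+1}-\xi^k\|_2^2=\|\Delta u^k\|_2^2+\|\Delta\mathbf s^k\|_2^2$, it gives $\Psi(\xi^{k+1})\le\Psi(\xi^k)-c\|\xi^{k+1}-\xi^k\|_2^2$.
\item The relative-error bound comes from Lemma~\ref{lem:relerr_conf}, which gives $\dist(0,\partial\Psi(\xi^{k+1}))\le \sqrt2\,C\|\xi^{k+1}-\xi^k\|_2$.
\item The KL property of $\Psi$ holds because it is semi-algebraic (Assumption~\ref{ass:kl}).
\end{itemize}
Since $u^k\in\mathcal U$ for all $k$, we have $\iota_{\mathcal U}(u^k)=0$, so $\Psi$ is continuous along the iterates. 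Continuity of $F$ and of the quadratic term also gives continuity on the closed domain $\mathcal U\times\mathbb R^{Nd}$.

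The first step is boundedness of $\{\xi^k\}$, which ABS needs. I would get it without any coercivity of $\Psi$ in $\mathbf s$. Compactness of $\mathcal U$ bounds $u^k$. Theorem~\ref{thm:small_gain_2ts} cannot be used directly, because $q_T<1$ is not assumed. Instead I would use the descent property: $\Psi(\xi^k)\le\Psi(\xi^0)$ together with $F\ge\min_{\mathcal U}F$ gives
\[
\tfrac{\kappa_s}{2}\|\mathbf s^{k,0}-\mathbf H u^k\|_2^2\le\Psi(\xi^0)-\min_{\mathcal U}F,
\]
so $\mathbf s^{k,0}$ is bounded. Hence $\Psi(\xi^k)$ is nonincreasing and bounded below, and it converges to some $\Psi^\star$. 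Summing the decrease inequality gives $\sum_k\|\xi^{k+1}-\xi^k\|_2^2<\infty$, so $\|\xi^{k+1}-\xi^k\|_2\to0$.

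Next I would show that the set $\omega$ of cluster points is nonempty, compact and connected, and that $\Psi\equiv\Psi^\star$ on it. Let $\xi^{k_j}\to\bar\xi$.
\begin{itemize}
\item Closedness of $\mathcal U$ gives $\bar u\in\mathcal U$, and continuity of $\Psi$ on $\mathcal U\times\mathbb R^{Nd}$ gives $\Psi(\bar\xi)=\Psi^\star$.
\item Lemma~\ref{lem:relerr_conf} gives some $w^{k}\in\partial\Psi(\xi^{k})$ with $w^k\to0$. Closedness of the graph of the limiting subdifferential (here simply outer semicontinuity of $N_{\mathcal U}$ plus continuity of $\nabla F$ in \eqref{eq:Psi_subdiff}) then gives $0\in\partial\Psi(\bar\xi)$.
\end{itemize}
Then the uniformized KL property on $\omega$ applies, with desingularizer $\varphi$. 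If $\Psi(\xi^K)=\Psi^\star$ for some $K$, the iterates are stationary from $K$ on. Otherwise, the usual concavity argument combines $\varphi'(\Psi(\xi^k)-\Psi^\star)\,\dist(0,\partial\Psi(\xi^k))\ge1$ with the two inequalities above. This gives
\[
2\|\xi^{k+1}-\xi^k\|_2\le\|\xi^k-\xi^{k-1}\|_2+\tfrac{\sqrt2 C}{c}\bigl[\varphi(\Psi(\xi^k)-\Psi^\star)-\varphi(\Psi(\xi^{k+1})-\Psi^\star)\bigr].
\]
Telescoping yields finite length, item~1. A finite-length sequence is Cauchy, so $\xi^k\to\xi^\star$, and by the cluster-point argument $\xi^\star$ is a critical point of $\Psi$, item~2.

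Items~3 and~4 come from reading off \eqref{eq:Psi_subdiff} at $\xi^\star$.
\begin{itemize}
\item The $\mathbf s$-row gives $\kappa_s(\mathbf s^\star-\mathbf H u^\star)=0$. Since $\kappa_s>0$, this yields $\mathbf s^\star=\mathbf 1_N\otimes u^\star$, item~4.
\item Substituting into the $u$-row, the coupling term $\kappa_s\mathbf H^\top(\mathbf Hu^\star-\mathbf s^\star)$ vanishes. What remains is $0\in\nabla F(u^\star)+N_{\mathcal U}(u^\star)=g(u^\star)+N_{\mathcal U}(u^\star)$, using Assumption~\ref{ass:kl}, which is item~3.
\end{itemize}
As a consistency check, $\mathbf e^{k,0}=\mathbf s^{k,0}-\mathbf H u^k\to0$, in agreement with Theorem~\ref{thm:iss_2ts}.

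I expect the main obstacle to be the closedness and continuity step. $\Psi$ contains the indicator $\iota_{\mathcal U}$, so ABS's requirement that $\Psi(\xi^{k_j})\to\Psi(\bar\xi)$ must be justified by feasibility of all the iterates, not by lower semicontinuity alone. The subdifferential formula \eqref{eq:Psi_subdiff} must also be valid, which requires the sum rule for a $C^1$ function plus the indicator of a closed convex set. A secondary subtlety is that boundedness must come from the descent property, since $q_T<1$ is not among the hypotheses.
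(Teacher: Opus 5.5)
Your proposal is correct and follows the paper's proof essentially step for step. It uses sufficient decrease from Corollary~\ref{cor:pointwise_descent}, relative error from Lemma~\ref{lem:relerr_conf}, boundedness of the $\mathbf s$-component from the descent bound rather than from Theorem~\ref{thm:small_gain_2ts}, subsequence continuity via feasibility of the iterates, and then the Attouch--Bolte--Svaiter argument, which you spell out through the uniformized KL property where the paper simply cites it. The only small deviation is item~4: you read it directly off the $\mathbf s$-row of \eqref{eq:Psi_subdiff} at the critical point, whereas the paper's appendix derives it from the dynamics via $\|\mathbf e^{k,T}\|_2\to0$ and $\|\Delta u^k\|_2\to0$; both routes are valid.
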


\begin{proof}
Sufficient decrease and compactness of \(\mathcal U\) bound the joint iterates. Relative error, continuity on the domain, and the KL property yield finite length and convergence~\cite{attouch_convergence_2013}. Equation~\eqref{eq:Psi_subdiff} gives exact assembly and stationarity at the limit. The full proof is given in Appendix~\ref{app:kl_proof}.
\end{proof}

\section{Numerical Examples}\label{sec:numerical}
We report three complementary 12-bus-feeder experiments covering discrete identity-resolved assembly, smooth potential tracking with KL diagnostics, and a non-potential saddle map with unequal blocks.\footnote{The experiment code is available at \url{https://github.com/d-vf/Vector_Assembly_D-LI}.}

\subsection{Common Network Setup}\label{subsec:num_setup}
The physical feeder has \(N_{\mathrm{bus}}=12\) buses and \(M=4\) agents at buses \((b_1,b_2,b_3,b_4)=(3,6,9,12)\). The estimator runs over \(N:=M=4\) agents, and \(\dist(j,\ell)\) denotes feeder hop distance. We use \(W=I_M-\alpha L_c\) with \(\alpha=0.249\); for all three experiments, Remark~\ref{rem:subspace} gives \(\sigma=\|\mathbf A_{\mathrm{sys}}\|_2=0.9012\). The feeder edge list and communication-graph construction are given in Appendix~\ref{app:numerical_details}.
\begin{figure}[t]
\centering
\subfloat[Physical feeder.]{%
    \includegraphics[width=0.34\columnwidth]{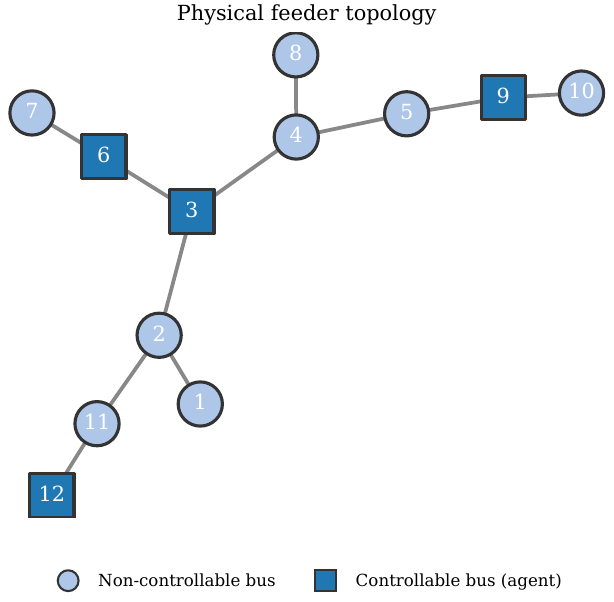}
}\hfill
\subfloat[Communication graph $\mathcal{G}_c$ induced by feeder hop distance.]{%
    \includegraphics[width=0.43\columnwidth]{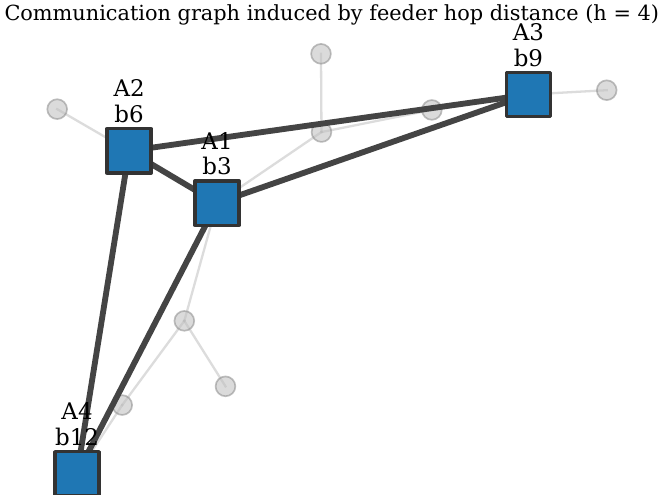}
} 
\caption{Network structure used in the numerical example. The physical topology induces the communication graph that drives diffusion and assembly.}
\label{fig:networks}
\vspace{-1mm}
\end{figure}

\subsection{Experiment A: Binary Demand Satisfaction}\label{subsec:exp_discrete}

\begin{remark}[Scope of Experiment~A]
\label{rem:num_scope_disc}
The D+LI estimator is covered by Theorem~\ref{thm:iss_2ts}, whereas the discrete repair rule lies outside Assumption~\ref{ass:smooth_c} and Theorem~\ref{thm:small_gain_2ts}. Corollary~\ref{cor:tracking} applies with \(\bar\Delta=\sqrt M\); the reported finite-horizon diagnostic uses the observed value \(\max_{0\le k<K}\|\Delta u^k\|_2=\sqrt3\).
\end{remark}

\paragraph{Setup and repair rule.}
Each agent selects \(u_i\in\{0,1\}\), subject to \(h(u):=D-\bar p^\top u\le0\), with \(D=3\), \(\bar p=\mathbf 1_M\), and costs \(\kappa=[1,2,6,12]\). These quantities are common knowledge and fixed offline. Thus, each agent evaluates the same system-wide candidate ranking locally from its assembled estimate and the fixed common data; only the resulting coordinate update is restricted by local write authority. The diagnostic objective is
\begin{align}
J(u):=
\sum_{i=1}^M\kappa_i u_i+\lambda_c[h(u)]_+^2,
\qquad
\lambda_c=50.
\end{align}
The pure-mixing comparison uses the same diffusion operator without selector-partitioned source innovations or an outer repair rule, thereby isolating its fractional mean-field state rather than posing a competing binary optimizer. D+LI thresholds the assembled estimates and applies the simultaneous write-authority-constrained repair. The initialization, run parameters, and complete repair rule are given in Appendix~\ref{app:numerical_details}.

\paragraph{Results.}
Figure~\ref{fig:traj_compare} compares the two information mechanisms. Pure mixing collapses the heterogeneous local values to their fractional mean and therefore loses their coordinate identity. D+LI combines the same mixing layer with selector-partitioned multi-source innovations, allowing every agent to reconstruct the identity-resolved binary vector used by the decentralized repair rule. The averaged state is infeasible, with \(x^\infty=0.472\,\mathbf{1}\) and \([h(x^\infty)]_+=1.112\). In contrast, D+LI assembles identity-resolved estimates and the simultaneous decentralized repair rule recovers the feasible integer solution \(u=[1,1,1,0]\) with \(J(u)=9\) and \([h(u)]_+=0\); see Fig.~\ref{fig:outer_metrics_disc} in Appendix~\ref{app:supplementary_figures}. 
For this illustrative instance, the repair reaches \(u=[1,1,1,0]\) after the first outer update and remains there, so the actual estimator error converges to zero. With \(u^0=\mathbf0\), \(\mathbf s^{0,0}=\mathbf0\), \(K=80\), and \(T=25\), Corollary~\ref{cor:tracking} gives \(\limsup_{k\to\infty}\|\mathbf e^{k,T}\|_2\le3.209\times10^{-1}\), while the calculated finite-horizon bound is \(\max_{0\le k<K}\|\mathbf e^{k,T}\|_2\le2.779\times10^{-1}\).
The finite-horizon unrolling is given in Appendix~\ref{app:tracking_calculations}. Thus Experiment~A illustrates identity-resolved assembly and dissemination; it does not claim that D+LI is necessary to obtain this particular feasible point.
\begin{figure}[t]
\centering
\includegraphics[width=0.64\columnwidth]{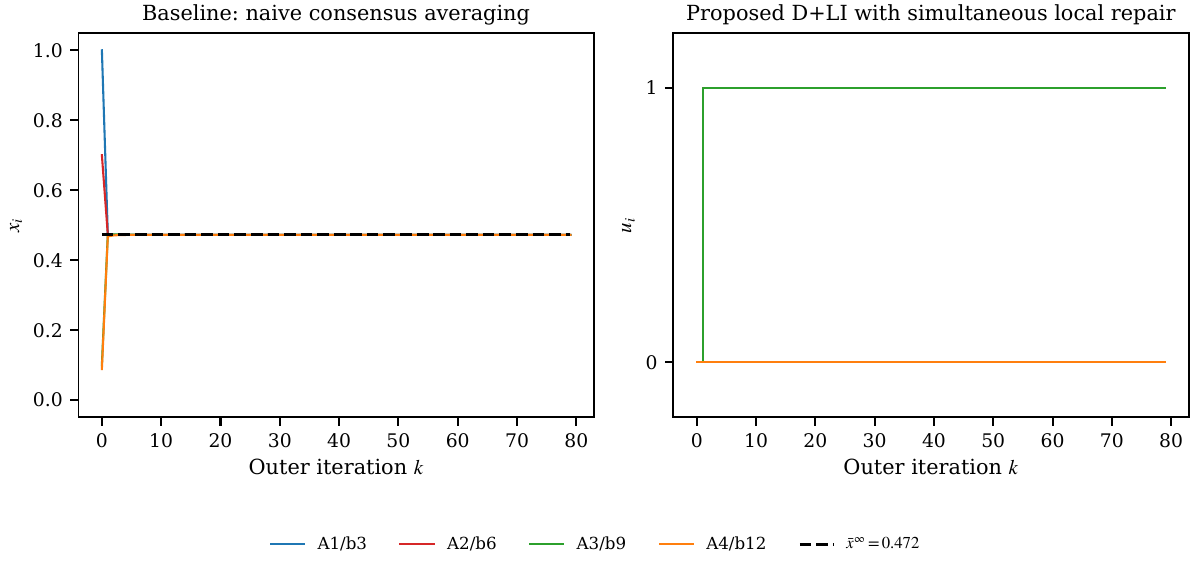}
\vspace{-2mm}
\caption{Experiment~A: decision trajectories. Left: pure mixing converges to a fractional infeasible fixed point. Right: the decentralized repair reaches a feasible integer solution.}
\label{fig:traj_compare}
\vspace{-1mm}
\end{figure}
\subsection{Experiment B: Smooth Voltage Regulation}\label{subsec:exp_smooth}
This experiment holds the smooth projected-gradient outer update fixed and compares D+LI with a pure-mixing baseline. The baseline removes the selector-partitioned source innovations while retaining the same diffusion operator.
With the pure-mixing estimator initialized at zero, its recursion \(\mathbf s^{k,t+1}=(W\otimes I_d)\mathbf s^{k,t}\) receives no block-source innovations and therefore remains identically zero. 
Before gradient evaluation, both methods substitute the exact owned block; in the pure-mixing baseline, all remote blocks remain at their nominal value zero:
\begin{align}
\check s_i^{k,T}
:=
\bar s_i^{k,T}
+\mathbf E_i
\left(
u_i^k-\mathbf E_i^\top\bar s_i^{k,T}
\right).
\label{eq:exact_local_substitution}
\end{align}
Accordingly, for this experiment define \(\hat g^k:=\sum_{i=1}^N\mathbf E_i g_i(\check s_i^{k,T})\). Exact local substitution cannot increase estimation error, preserving the gradient-mismatch bound. Appendix~\ref{app:numerical_details} verifies that all gain,
descent, relative-error, and KL bounds retain the same constants.

\paragraph{Model and objective.}
Voltages are approximated by \(V(u)=V_0+Xu\in\mathbb R^{N_{\mathrm{bus}}}\), with \(X_{ji}=0.4e^{-\dist(j,b_i)/1.5}\), nominal \([V_0]_j=1.02\) p.u., and the stressed buses \(\{3,5,8,10,12\}\) elevated to \(1.12\) p.u. to simulate overvoltage. The safety limit is \(V_{\max}=1.05\) p.u. and \(u_i\in[-1,1]\). The objective is \(J(u)=\tfrac{1}{2}\sum_i \gamma_i u_i^2+\lambda\bigl\|[V(u)-V_{\max}]_+\bigr\|_2^2\), with \(\gamma=[1.0,1.5,0.8,1.2]\) and \(\lambda=80\). For the feeder topology and actuator locations in Fig.~\ref{fig:networks}, the resulting matrix satisfies \(\|X\|_2\approx0.810818\); using its unrounded value gives \(L_{\mathrm{bound}}=106.6882\).
Let
\begin{align}
c_V(u)&:=V_0+Xu-V_{\max}\mathbf1,
&
D_\gamma&:=\diag(\gamma).
\end{align}
The resulting global map is
\begin{align}
g(u)=\nabla J(u)
=
D_\gamma u+2\lambda X^\top[c_V(u)]_+.
\end{align}

By nonexpansiveness of the positive-part mapping, a valid conservative choice is \(L_g=L_{\mathrm{blk}}=L_{\mathrm{bound}}=106.6882\). Moreover, \(J\) and \(\mathcal U=[-1,1]^M\) are semi-algebraic, so Assumptions~\ref{ass:smooth_c} and~\ref{ass:kl} hold with \(F\equiv J\). The Lipschitz calculation is given in Appendix~\ref{app:numerical_details}.

\paragraph{Parameters.}
For the main tracking run, \(K=80\), \(T=10\), \(\eta=0.003\), \(u^0=\mathbf0\), and \(\mathbf s^{0,0}=\mathbf0\). The self-consistent gain condition is strictly satisfied with \(q_T=0.580<1\). Over the reported trajectory, the finite-horizon a posteriori bound is \(\|\mathbf e^{k,T}\|_2\le3.89\times10^{-2}\) for \(0\le k<K\).
The finite-horizon derivation is reported in Appendix~\ref{app:tracking_calculations}; it is not used as an independently established infinite-horizon variation bound. The dominant-mode tightness diagnostic appears in Fig.~\ref{fig:iss_smooth} of Appendix~\ref{app:supplementary_figures}. 
For the finite-length diagnostic, we keep the same outer problem and stepsize but use \(T=15\). With \(L_g=L_{\mathrm{blk}}=106.6882\), \(\sigma=0.901227\), and \((\varepsilon,\tau,\kappa_s)=(5.0\times10^{-3},0.5,10)\), the pointwise KL margins are \(\tilde c_u=119.989>0\) and \(c_s^{+}(T)=1.072>0\).
Thus \eqref{eq:pointwise_absorb} holds strictly. The complete parameter calculation and the auxiliary summed-absorption diagnostic are reported in Appendix~\ref{app:numerical_details}. The finite-length diagnostic in Fig.~\ref{fig:finite_length_smooth} of Appendix~\ref{app:supplementary_figures} uses \(\mathcal L_K:=\sum_{k=0}^{K-1}\|\xi^{k+1}-\xi^k\|_2\), where \(\xi^k=(u^k,\mathbf s^{k,0})\). At \(K=80\), \(\mathcal L_{80}=9.2581\times10^{-1}\) and \(\|\xi^{80}-\xi^{79}\|_2=1.9357\times10^{-3}\).
\paragraph{Results.}
Dominant-mode initialization attains \(\|\mathbf e^T\|_2/\|\mathbf e^0\|_2=\sigma^T\); the positive margins at \(T=15\) certify an admissible KL timescale. For the \(T=10\) tracking run at \(K=80\), the pure-mixing baseline reaches \(J\approx0.3393\) with zero residual violation, whereas D+LI reaches \(J\approx0.1216\) with maximum residual violation \(2.37\times10^{-2}\) p.u. Under the chosen soft-penalty tuning, identity-resolved assembly reduces the objective while permitting a positive residual violation. The observed tail estimator error satisfies \(\max_{40\le k<K}\|\mathbf e^{k,T}\|_2=1.98\times10^{-3}\), below the full-horizon bound.

\subsection{Experiment C: Non-Potential Outer Map}\label{subsec:exp_saddle}
Dualizing the voltage constraint gives, for \(z=(u,\mu)\), the non-potential saddle map

\begin{align}
g(z)
=
\begin{bmatrix}
D_\gamma u+X^\top\mu\\[1mm]
-\bigl(V_0+Xu-V_{\max}\mathbf 1\bigr)
\end{bmatrix}.
\end{align}

Use \(\mathcal U=[-1,1]^4\times[0,50]^{12}\) and \(L_{\mathrm{blk}}=\|J_g\|_2=1.719239\). Each agent applies \eqref{eq:exact_local_substitution} to its owned block of \(z\). The nonzero skew-symmetric part of \(J_g\) excludes a potential. With unequal block dimensions \((d_1,\dots,d_4)=(6,3,4,3)\), \(T=15\), and \(\eta=0.2\), Theorem~\ref{thm:small_gain_2ts} gives \(q_T=0.355<1\). Starting from zero decisions and replicas, over \(K=1500\) iterations the maximum assembly error is \(1.314\times10^{-2}\), below the finite-horizon bound \(1.675\times10^{-2}\).
The construction, parameter checks, and diagnostics are reported in Appendix~\ref{app:experiment_c_details}.

\section{Conclusion}
We established four properties of Selector-Anchored D+LI: the impossibility of isolating the assembled replica by pure averaging with raw additive selector sources; global inner contraction with a dimension-independent factor; finite-$T$ ISS tracking bounds for Lipschitz coordination maps; and KL finite-length convergence of coupled projected-gradient dynamics in the potential case. Write-authority anchoring preserves block identity, enabling coupled-gradient evaluation from assembled estimates with explicit tracking guarantees. 

\appendices

\section{Comparison of Distributed Architectures}
\label{app:comparison}
The comparison in Table~\ref{tab:comparison} distinguishes the target, information structure, and outer-map requirements of D+LI from related distributed methods.

\begin{table*}[!t]
\centering
\caption{Qualitative Comparison of Distributed Methods and Information-Exchange Architectures for Physical Systems}
\label{tab:comparison}
\vspace{-3mm}
{\tiny
\setlength{\tabcolsep}{0.7pt}
\renewcommand{\arraystretch}{1.12}
\sloppy
\begin{tabular}{@{}
>{\raggedright\arraybackslash}p{1.65cm}
>{\raggedright\arraybackslash}p{2.60cm}
>{\raggedright\arraybackslash}p{2.60cm}
>{\raggedright\arraybackslash}p{2.60cm}
>{\raggedright\arraybackslash}p{2.60cm}
>{\raggedright\arraybackslash}p{2.60cm}
>{\raggedright\arraybackslash}p{2.60cm}
@{}}
\toprule
\textbf{Feature} &
\textbf{\shortstack[l]{Proposed (Selector-Anchored\\D+LI)}} &
\textbf{\shortstack[l]{C+I\\(Kar \& Moura)
\cite{kar_distributed_2012, kar_consensus_2013}}} &
\textbf{\shortstack[l]{GT / Dynamic\\consensus
\cite{nedic_achieving_2017,kia_tutorial_2019,shi_extra_2015}}} &
\textbf{\shortstack[l]{Partial-decision\\NE/GNE
\cite{pavel_distributed_2020,bianchi_fully_2021,bianchi_fast_2022}}} &
\textbf{\shortstack[l]{Feedback optimization\\
\cite{picallo_closing_2020}}} &
\textbf{\shortstack[l]{ADMM / dual decomp.\\
\cite{boyd_distributed_2011,kekatos_distributed_2013}}} \\
\midrule

\textbf{Primary Goal} &
Identity-resolved vector assembly &
Estimation and filtering &
Consensus optimization with shared \(x\) / tracking of aggregates &
Equilibrium seeking under partial-decision information &
Regulation &
Constraint splitting \\
\addlinespace

\textbf{Atomic Mechanism} &
Block selectors \(\mathbf E_i\) and coordinate projectors
\(\mathbf P_i=\mathbf E_i\mathbf E_i^\top\) &
Weighted averaging + measurement innovation &
Mixing + tracking state (gradient tracker / dynamic average) &
Local joint-profile estimates + consensus and
pseudo-gradient/proximal updates &
Measurement feedback &
Dual updates / augmented Lagrangian \\
\addlinespace

\textbf{Information structure supported} &
Identity-resolved coupled updates with blockwise write authority &
Measurement-driven distributed estimation &
Shared-variable optimization / aggregate tracking &
Identity-resolved estimates embedded within equilibrium seeking &
Measurement-based regulation &
Constraint splitting through local subproblems and dual variables \\
\addlinespace

\textbf{Coupling Handling} &
Explicit through local replicas of the stacked vector &
Common vector inferred from distributed local observation models &
Implicit through shared decision \(x\) and tracked aggregates &
Explicit through local estimates of the joint decision profile &
Implicit through physical-system feedback &
Implicit through dual variables \\
\addlinespace

\textbf{Innovation Role} &
Deterministic anchoring of locally owned blocks &
Measurement-based correction &
Aggregate correction in the tracking update &
Own-decision update and local-estimate correction &
Integral or feedback correction &
N/A \\
\addlinespace

\textbf{\shortstack[l]{Finite-iteration\\inner accuracy}} &
Explicit ISS and self-consistent finite-\(T\) gain bounds &
Typically asymptotic or steady-state tracking error &
Finite rounds yield residual tracking error depending on
connectivity and stepsizes &
Usually no separate inner loop; estimates and decisions are
analyzed jointly &
N/A (no inner assembly loop) &
Inexact solves require inexact-ADMM conditions
\cite{boyd_distributed_2011,goldstein_fast_2014,boley_local_2013} \\
\addlinespace

\textbf{Outer-map requirement} &
ISS: bounded variation; gain: Lipschitz \(g\), compact \(\mathcal U\);
KL convergence: potential and KL regularity &
Linear or statistical measurement model &
Objective regularity and stepsize--connectivity conditions;
dynamic consensus additionally requires signal-variation conditions &
Monotonicity, cocoercivity, or related problem-specific
operator conditions &
Model and sensitivity regularity of the physical map &
Convexity and separability across the selected split \\
\addlinespace

\textbf{How coupled information enters local updates} &
Assembled estimate of \(u\); feasibility depends on the outer loop &
Local measurement innovations and neighbor exchange toward a
common parameter estimate &
Agreement on shared \(x\) or tracking of network aggregates &
Estimated joint profile enters the local pseudo-gradient or
proximal response &
Physical measurements enter the regulation update &
Constraint information enters through dual updates \\
\bottomrule
\end{tabular}
}
\end{table*}

\newpage

\section{Proofs of Estimator and Gain Results}
\label{app:estimator_gain_proofs}

\subsection{Proof of Lemma~\ref{lem:contraction}}
\begin{proof}
Since \(L=L^\top\) and \(\mathbf P=\mathbf P^\top\), the matrix \(\mathbf A_{\mathrm{sys}}=(W\otimes I_d)-\mathbf P\) is symmetric, so \(\|\mathbf A_{\mathrm{sys}}\|_2=\max\{|\lambda_{\max}|,|\lambda_{\min}|\}\). Under \(0<\alpha<1/\lambda_N(L)\), the eigenvalues of \(W=I_N-\alpha L\) lie in \((0,1]\), and \(W\) has a simple unit eigenvalue with eigenspace \(\operatorname{span}\{\mathbf 1_N\}\). Consequently, \(W\otimes I_d\) has unit eigenspace
\begin{align}
\{\mathbf 1_N\otimes v:\,v\in\mathbb R^d\}.
\end{align}
For any \(\mathbf x\neq 0\),
\begin{align}
\frac{\mathbf x^\top\mathbf A_{\mathrm{sys}}\mathbf x}{\|\mathbf x\|_2^2}
&=
\frac{\mathbf x^\top(W\otimes I_d)\mathbf x}{\|\mathbf x\|_2^2}
-
\frac{\mathbf x^\top\mathbf P\mathbf x}{\|\mathbf x\|_2^2}
\le 1.
\end{align}
Equality would require \(\mathbf x=\mathbf 1_N\otimes v\) and \(\mathbf P\mathbf x=0\). By \eqref{eq:selector_partition}, however,
\begin{align}
(\mathbf 1_N\otimes v)^\top \mathbf P (\mathbf 1_N\otimes v)
=
v^\top\Big(\sum_{i=1}^N\mathbf P_i\Big)v
=
\|v\|_2^2>0
\end{align}
for \(v\neq 0\). Hence \(\lambda_{\max}(\mathbf A_{\mathrm{sys}})<1\). Also, since \(0\preceq \mathbf P\preceq I_{Nd}\),
\begin{align}
\mathbf A_{\mathrm{sys}}
\succeq
(W\otimes I_d)-I_{Nd},
\end{align}
and therefore
\begin{align}
\lambda_{\min}(\mathbf A_{\mathrm{sys}})
\ge
\lambda_{\min}(W\otimes I_d)-1
=
-\alpha\lambda_N(L)>-1.
\end{align}
Thus \(\|\mathbf A_{\mathrm{sys}}\|_2<1\). The associated affine map is therefore contractive. Since \(W\mathbf1_N=\mathbf1_N\),
\begin{align}
(W\otimes I_d)(\mathbf1_N\otimes u^k)
=
\mathbf1_N\otimes u^k,
\end{align}
and its correction term vanishes at \(\mathbf s=\mathbf1_N\otimes u^k\):
\begin{align}
\mathbf P\bigl((\mathbf1_N\otimes u^k)
-(\mathbf1_N\otimes u^k)\bigr)=0.
\end{align}
Thus \(\mathbf1_N\otimes u^k\) is a fixed point and, by contraction, the unique fixed point.
\end{proof}

\subsection{Proof of Proposition~\ref{prop:no_avg}}
\begin{proof}
At any fixed point,
\begin{align}
\bigl(I_{Nd}-W\otimes I_d\bigr)\mathbf s
=
\mathbf b(u).
\end{align}
Since \(\mathbf1_N^\top L=0\),
\begin{align}
(\mathbf1_N^\top\otimes I_d)
\bigl(I_{Nd}-W\otimes I_d\bigr)=0.
\end{align}
Therefore, solvability requires
\begin{align}
0
=
(\mathbf1_N^\top\otimes I_d)\mathbf b(u)
=
\sum_{i=1}^N\mathbf E_i u_i
=
u.
\end{align}
Hence no fixed point exists when \(u\neq0\). When \(u=0\), \(\mathbf b(u)=0\), and connectivity gives the stated consensus fixed-point set.
\end{proof}

\subsection{Coordinate-subspace spectrum}
Let \(o(\ell)\) denote the agent owning coordinate \(\ell\), and let \(\zeta_\ell\in\mathbb R^d\) denote the \(\ell\)-th standard basis vector. Each coordinate subspace \(\{z\otimes \zeta_\ell:z\in\mathbb R^N\}\) is invariant under \(\mathbf A_{\mathrm{sys}}\), on which the operator acts as \(W-\chi_{o(\ell)}\chi_{o(\ell)}^\top\). Consequently,
\begin{align}
\operatorname{spec}(\mathbf A_{\mathrm{sys}})
=
\bigcup_{\ell=1}^{d}
\operatorname{spec}\!\left(
W-\chi_{o(\ell)}\chi_{o(\ell)}^\top
\right),
\end{align}
which yields the expression for \(\sigma\) in Remark~\ref{rem:subspace}.

\subsection{Block-to-global Lipschitz bound}
By orthogonality of the selector blocks,
\begin{align}
\|g(x)-g(y)\|_2^2
&=
\Big\|\sum_{i=1}^N
\mathbf E_i\bigl(g_i(x)-g_i(y)\bigr)\Big\|_2^2
\notag\\
&=
\sum_{i=1}^N
\|g_i(x)-g_i(y)\|_2^2
\notag\\
&\le
\left(\sum_{i=1}^N L_i^2\right)
\|x-y\|_2^2.
\end{align}
Hence
\begin{align}
\operatorname{Lip}_{\mathcal U}(g)
\le
\bar L_g
:=
\left(\sum_{i=1}^N L_i^2\right)^{1/2}
\le
\sqrt N\,L_{\mathrm{blk}}.
\end{align}
Thus \(\bar L_g\) is an admissible generic bound, while \(L_g\) may denote any sharper certified global Lipschitz constant.

\subsection{Proof of Theorem~\ref{thm:iss_2ts}}
\begin{proof}
\eqref{eq:inner_contr_T} follows by iterating \eqref{eq:error_dynamics_inner} and using $\|\mathbf{A}_{\mathrm{sys}}\|_2=\sigma<1$. For \eqref{eq:outer_iss}, expand using the warm start \eqref{eq:warmstart}:
\begin{align}
\mathbf{e}^{k+1,0}
&= \mathbf{s}^{k+1,0} - (\mathbf{1}_N\otimes u^{k+1}) \notag\\
&= \mathbf{s}^{k,T}   - (\mathbf{1}_N\otimes u^{k+1}) \notag\\
&= \underbrace{\mathbf{s}^{k,T}-(\mathbf{1}_N\otimes u^k)}_{\mathbf{e}^{k,T}}
   -(\mathbf{1}_N\otimes\Delta u^k).
\end{align}
Taking norms and using $\|\mathbf{1}_N\otimes\Delta u^k\|_2=\sqrt{N}\|\Delta u^k\|_2$ and \eqref{eq:inner_contr_T} gives \eqref{eq:outer_iss}.
\end{proof}

\subsection{Proof of Theorem~\ref{thm:small_gain_2ts}}
\begin{proof}
By orthogonality of the selector ranges,
\begin{align}
\|\hat g^{k}-g(u^{k})\|_2^2
&=
\sum_{i=1}^N
\|g_i(\bar s_i^{k,T})-g_i(u^{k})\|_2^2
\notag\\
&\le
\sum_{i=1}^N
L_i^2\|\bar s_i^{k,T}-u^{k}\|_2^2
\notag\\
&\le
L_{\mathrm{blk}}^2\|\mathbf e^{k,T}\|_2^2.
\end{align}
The last inequality uses \(u^k\in\mathcal U\) and nonexpansiveness of projection:
\begin{align}
\|\bar s_i^{k,T}-u^k\|_2
\le
\|s_i^{k,T}-u^k\|_2.
\end{align}
Moreover, since \(u^k=\proj_{\mathcal U}(u^k)\), the decentralized update gives
\begin{align}
\|\Delta u^k\|_2
&\le
\|\tilde u^{k+1}-u^k\|_2
=
\eta\|\hat g^k\|_2
\notag\\
&\le
\eta\bigl(\|g(u^k)\|_2
+L_{\mathrm{blk}}\|\mathbf e^{k,T}\|_2\bigr),
\end{align}
which proves \eqref{eq:outer_gain_bounds}.

From Theorem~\ref{thm:iss_2ts},
\begin{align}
\|\mathbf e^{k+1,0}\|_2
\le
\sigma^T\|\mathbf e^{k,0}\|_2+\sqrt N\,\|\Delta u^k\|_2.
\end{align}
Using \eqref{eq:outer_gain_bounds}, the bound \(\|g(u^k)\|_2\le G_{\max}\), and \eqref{eq:inner_contr_T},
\begin{align}
\|\Delta u^k\|_2
&\le
\eta\|\hat g^k\|_2 \notag\\
&\le
\eta\big(G_{\max}+L_{\mathrm{blk}}\|\mathbf e^{k,T}\|_2\big) \notag\\
&\le
\eta\big(G_{\max}+L_{\mathrm{blk}}\sigma^T\|\mathbf e^{k,0}\|_2\big).
\end{align}
Combining the two inequalities gives
\begin{align}
\|\mathbf e^{k+1,0}\|_2
\le
\sigma^T(1+\sqrt N\,\eta L_{\mathrm{blk}})\|\mathbf e^{k,0}\|_2
+\sqrt N\,\eta G_{\max}.
\end{align}
Thus the affine recursion is contractive whenever
\begin{align}
q_T
:=
\sigma^T
\bigl(1+\sqrt N\,\eta L_{\mathrm{blk}}\bigr)
<1,
\end{align}
which is equivalent to \eqref{eq:small_gain_2ts}. Iterating the recursion gives
\begin{align}
\|\mathbf e^{k,0}\|_2
\le{}&
q_T^k\|\mathbf e^{0,0}\|_2
\notag\\
&+
\frac{
\sqrt N\,\eta G_{\max}
\bigl(1-q_T^k\bigr)
}{
1-q_T
}.
\end{align}
Taking the limit superior proves
\begin{align}
\limsup_{k\to\infty}\|\mathbf e^{k,0}\|_2
\le
\frac{\sqrt N\,\eta G_{\max}}{1-q_T}.
\end{align}
Finally, since
\begin{align}
\|\mathbf e^{k,T}\|_2
\le
\sigma^T\|\mathbf e^{k,0}\|_2,
\end{align}
the preceding finite-\(k\) estimate gives
\begin{align}
\|\mathbf e^{k,T}\|_2
\le{}&
\sigma^T q_T^k
\|\mathbf e^{0,0}\|_2
\notag\\
&+
\frac{
\sqrt N\,\eta G_{\max}\sigma^T
\bigl(1-q_T^k\bigr)
}{
1-q_T
}.
\label{eq:small_gain_terminal}
\end{align}
Taking the limit superior gives \eqref{eq:small_gain_terminal_limsup}.
\end{proof}

\subsection{Proof of Corollary~\ref{cor:tracking}}
\begin{proof}
Let \(a:=\sigma^T\in[0,1)\) and \(x_k:=\|\mathbf e^{k,0}\|_2\). By Theorem~\ref{thm:iss_2ts},
\begin{align}
x_{k+1}
\le
a x_k+\sqrt N\,\bar\Delta.
\end{align}
Iteration gives
\begin{align}
x_k
\le
a^k x_0
+
\sqrt N\,\bar\Delta
\sum_{j=0}^{k-1}a^j.
\end{align}
Therefore,
\begin{align}
\limsup_{k\to\infty}x_k
\le
\frac{\sqrt N}{1-\sigma^T}\bar\Delta.
\end{align}
Finally, \(\|\mathbf e^{k,T}\|_2\le\sigma^T x_k\), which gives the second bound.
\end{proof}

\section{Proofs of Descent Results}
\label{app:descent_proofs}

\subsection{Estimator-increment bound}
Since \(\mathbf e^{k,T}=A_T\mathbf e^{k,0}\) and \(\Delta\mathbf s^k=(A_T-I)\mathbf e^{k,0}\), the matrix \(I-A_T\) is nonsingular and
\begin{align}
\mathbf e^{k,T}
=
-A_T(I-A_T)^{-1}\Delta\mathbf s^k .
\end{align}
Moreover, \(A_T\) is symmetric and \(\operatorname{spec}(A_T)\subset[-a,a]\), so
\begin{align}
\|A_T(I-A_T)^{-1}\|_2
&=
\max_{\lambda\in\operatorname{spec}(A_T)}
\frac{|\lambda|}{1-\lambda}
\notag\\
&\le
\frac{a}{1-a}.
\end{align}
This proves \eqref{eq:eT_deltas_bound}.

\subsection{Proof of Theorem~\ref{thm:suffdec_conf}}
\begin{proof}
Recall that \(A_T:=\mathbf A_{\mathrm{sys}}^{\,T}\) denotes the \(T\)-th matrix power. Write
\begin{align}
\Psi(\xi^{k+1})-\Psi(\xi^k)
&= \big(F(u^{k+1})-F(u^k)\big) \notag\\
&\quad +\frac{\kappa_s}{2}\big(\|r^{k+1}\|_2^2-\|r^k\|_2^2\big),
\end{align}
since \(u^k,u^{k+1}\in\mathcal U\).
For the \(F\)-term, projection optimality of \(u^{k+1}=\proj_{\mathcal U}(u^k-\eta\hat g^k)\) gives
\begin{align}
\left\langle
\eta^{-1}(u^k-u^{k+1})-\hat g^k,\,
u-u^{k+1}
\right\rangle
\le 0,
\qquad \forall u\in\mathcal U .
\end{align}
Taking \(u=u^k\) yields
\begin{align}
\langle \hat g^k,\Delta u^k\rangle
\le
-\eta^{-1}\|\Delta u^k\|_2^2 .
\end{align}
Since \(\hat g^k=\nabla F(u^k)+\delta^k\), the descent lemma gives
\begin{align}
F(u^{k+1})-F(u^k)
\le
-\Bigl(\eta^{-1}-\frac{L_g}{2}\Bigr)\|\Delta u^k\|_2^2
-\langle \delta^k,\Delta u^k\rangle .
\end{align}
Using Young's inequality and
\(\|\delta^k\|_2\le L_{\mathrm{blk}}\|\mathbf e^{k,T}\|_2\),
\begin{align}
-\langle \delta^k,\Delta u^k\rangle
\le
C_u\|\Delta u^k\|_2^2
+
C_e\|\mathbf e^{k,T}\|_2^2 ,
\end{align}
with \(C_u=1/(2\varepsilon)\) and
\(C_e=(\varepsilon/2)L_{\mathrm{blk}}^2\). Hence
\begin{align}
F(u^{k+1})-F(u^k)
\le
-\Big(\eta^{-1}-\frac{L_g}{2}-C_u\Big)
\|\Delta u^k\|_2^2
+
C_e\|\mathbf e^{k,T}\|_2^2 .
\end{align}
For the tracking term, using \(r^{k+1}=\mathbf e^{k,T}-\mathbf H\Delta u^k\), \(\mathbf e^{k,T}=A_T\mathbf e^{k,0}\), symmetry of \(A_T\), and \(\|A_T\|_2=\sigma^T<1\), we have \(\mathrm{spec}(A_T)\subset[-\sigma^T,\sigma^T]\). Hence, for every eigenvalue \(\lambda\) of \(A_T\),
\begin{align}
1-\lambda^2
=
(1-\lambda)(1+\lambda)
\ge
\frac{1-\sigma^T}{1+\sigma^T}(1-\lambda)^2,
\end{align}
so
\begin{align}
I-A_T^2
\succeq
\frac{1-\sigma^T}{1+\sigma^T}(I-A_T)^2 .
\end{align}
Moreover,
\begin{align}
\|r^{k+1}\|_2^2-\|r^k\|_2^2
={}&
-(\mathbf e^{k,0})^\top
(I-A_T^2)\mathbf e^{k,0}
\notag\\
&-2\langle\mathbf e^{k,T},
\mathbf H\Delta u^k\rangle
+N\|\Delta u^k\|_2^2 .
\end{align}
Using
\begin{align}
\Delta\mathbf s^k
=-(I-A_T)\mathbf e^{k,0}
\end{align}
and Young's inequality,
\begin{align}
-2\langle\mathbf e^{k,T},\mathbf H\Delta u^k\rangle
\le
\tau\|\mathbf e^{k,T}\|_2^2
+\tau^{-1}N\|\Delta u^k\|_2^2,
\end{align}
we obtain
\begin{align}
\|r^{k+1}\|_2^2-\|r^k\|_2^2
&\le -\frac{1-\sigma^T}{1+\sigma^T}
\|\Delta\mathbf s^k\|_2^2 \notag\\
&\quad +\tau\|\mathbf e^{k,T}\|_2^2
+\bigl(1+\tau^{-1}\bigr)N\|\Delta u^k\|_2^2 .
\end{align}
Collecting terms gives
\begin{align}
\Psi(\xi^{k+1})
&\le \Psi(\xi^k)
-\tilde c_u\|\Delta u^k\|_2^2 \notag\\
&\quad -c_s\|\Delta\mathbf s^k\|_2^2
+\tilde C_e\|\mathbf e^{k,T}\|_2^2,
\end{align}
with the constants defined in Theorem~\ref{thm:suffdec_conf}.
\end{proof}

\subsection{Proof of Corollary~\ref{cor:pointwise_descent}}
\begin{proof}
By \eqref{eq:eT_deltas_bound},
\begin{align}
\tilde C_e\|\mathbf e^{k,T}\|_2^2
\le
\tilde C_e
\left(\frac{\sigma^T}{1-\sigma^T}\right)^2
\|\Delta\mathbf s^k\|_2^2.
\end{align}
Substituting this bound into Theorem~\ref{thm:suffdec_conf} gives the result.
\end{proof}

\section{Estimator $\ell_2$-gain}
\begin{lemma}[Estimator $\ell_2$-gain]\label{lem:l2_gain}
Let \(a:=\sigma^T\in[0,1)\). Under the hypotheses of Theorem~\ref{thm:iss_2ts},
\begin{align}
\|\mathbf e^{k+1,0}\|_2
&\le
a\|\mathbf e^{k,0}\|_2+\sqrt N\|\Delta u^k\|_2, \notag\\
\|\mathbf e^{k,T}\|_2
&\le
a\|\mathbf e^{k,0}\|_2 .
\end{align}
Then, for any \(\nu>0\) and any \(K\ge 1\),
\begin{align}
\sum_{k=0}^{K-1}\|\mathbf e^{k,T}\|_2^2
&\le
(1+\nu)\frac{a^2}{1-a^2}\|\mathbf e^{0,0}\|_2^2 \notag\\
&\quad +
\Gamma_e(T,\nu)
\sum_{k=0}^{K-1}\|\Delta u^k\|_2^2,
\end{align}
where
\begin{align}
\Gamma_e(T,\nu)
:=
(1+\nu^{-1})
\frac{N a^2}{(1-a)^2}
=
(1+\nu^{-1})
\frac{N\sigma^{2T}}{(1-\sigma^T)^2}.
\end{align}
\end{lemma}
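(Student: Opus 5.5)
Set \(x_k:=\|\mathbf e^{k,0}\|_2\) and \(w_k:=\sqrt N\|\Delta u^k\|_2\). We first unroll the scalar recursion \(x_{k+1}\le a x_k+w_k\) (Theorem~\ref{thm:iss_2ts}) to obtain the convolution bound
\begin{align}
x_k\le a^k x_0+\sum_{j=0}^{k-1}a^{k-1-j}w_j .
\end{align}
Combining this with \(\|\mathbf e^{k,T}\|_2\le a x_k\) gives
\(\|\mathbf e^{k,T}\|_2\le a^{k+1}x_0+a\,(h*w)_k\), where \(h_m:=a^{m}\) for \(m\ge0\) and the convolution is truncated causally.

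Next we square this bound and split the two terms with the elementary inequality \((p+q)^2\le(1+\nu)p^2+(1+\nu^{-1})q^2\). This produces
\begin{align}
\|\mathbf e^{k,T}\|_2^2\le(1+\nu)a^{2k+2}x_0^2+(1+\nu^{-1})a^2\bigl((h*w)_k\bigr)^2 .
\end{align}
Summing over \(k=0,\dots,K-1\), the first term is a geometric series bounded by \(a^2/(1-a^2)\). This gives exactly \((1+\nu)\frac{a^2}{1-a^2}\|\mathbf e^{0,0}\|_2^2\).

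The main step is the second term, which requires an \(\ell_2\) bound for the convolution. We apply Young's convolution inequality, \(\|h*w\|_{\ell_2}\le\|h\|_{\ell_1}\|w\|_{\ell_2}\) with \(\|h\|_{\ell_1}\le 1/(1-a)\). To stay self-contained, we can instead use Cauchy--Schwarz with weights \(a^{m}\):
\begin{align}
\Bigl(\sum_j a^{k-1-j}w_j\Bigr)^2\le\Bigl(\sum_j a^{k-1-j}\Bigr)\Bigl(\sum_j a^{k-1-j}w_j^2\Bigr)\le\frac{1}{1-a}\sum_j a^{k-1-j}w_j^2 .
\end{align}
We then sum over \(k\) and exchange the order of summation. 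Each \(w_j^2\) collects at most \(\sum_{m\ge0}a^m=1/(1-a)\), which yields
\(\sum_{k<K}((h*w)_k)^2\le(1-a)^{-2}\sum_{j<K}w_j^2\). Substituting \(w_j^2=N\|\Delta u^j\|_2^2\) gives the constant \((1+\nu^{-1})Na^2/(1-a)^2=\Gamma_e(T,\nu)\).

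The only delicate point is the index bookkeeping in the truncated sums. Keeping the truncated sums, with indices \(j\le K-2\), is harmless because adding the nonnegative missing terms only enlarges the right-hand side, so the stated bound with the sum up to \(K-1\) follows. The case \(a=0\) is trivial, since both sides reduce consistently.
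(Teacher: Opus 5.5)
Your proof is correct and follows the paper's argument. Both unroll $x_{k+1}\le a x_k+\sqrt N\|\Delta u^k\|_2$, split the square with the weighted Young inequality, sum the geometric term, and bound the causal convolution in $\ell_2$ by the $\ell_1$-norm of the kernel. The only differences are cosmetic: you factor $a$ out of the kernel, using $h_m=a^m$ for $m\ge0$ with norm $1/(1-a)$ instead of the paper's $h_\ell=a^\ell$ for $\ell\ge1$ with norm $a/(1-a)$, and your weighted Cauchy--Schwarz computation gives a self-contained proof of the Young convolution step that the paper simply cites.
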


\begin{proof}
Define
\begin{equation}
x_k:=\|\mathbf e^{k,0}\|_2,
\qquad
d_k:=\sqrt N\,\|\Delta u^k\|_2 .
\end{equation}
The estimator recursion gives
\begin{equation}
x_{k+1}\le a x_k+d_k .
\end{equation}
Iterating this inequality yields, for every \(k\ge 0\),
\begin{equation}
x_k
\le
a^k x_0
+
\sum_{j=0}^{k-1}a^{k-1-j}d_j,
\end{equation}
where the sum is understood to be zero when \(k=0\). Since \(\|\mathbf e^{k,T}\|_2\le a x_k\), it follows that
\begin{equation}
\|\mathbf e^{k,T}\|_2
\le
a^{k+1}x_0
+
\sum_{j=0}^{k-1}a^{k-j}d_j .
\end{equation}
For any \(\nu>0\), Young's inequality gives
\begin{align}
\|\mathbf e^{k,T}\|_2^2
\le{}&
(1+\nu)a^{2k+2}x_0^2 \notag\\
&+
(1+\nu^{-1})
\left(
\sum_{j=0}^{k-1}a^{k-j}d_j
\right)^2 .
\end{align}
Summing the first term over \(k=0,\dots,K-1\) gives
\begin{equation}
\sum_{k=0}^{K-1}a^{2k+2}x_0^2
\le
\frac{a^2}{1-a^2}x_0^2 .
\end{equation}

For the second term, consider the causal convolution kernel \(h_\ell:=a^\ell\), \(\ell\ge1\). Its \(\ell_1\)-norm is
\begin{equation}
\|h\|_{\ell_1}
=
\sum_{\ell=1}^{\infty}a^\ell
=
\frac{a}{1-a}.
\end{equation}
Young's convolution inequality therefore gives
\begin{align}
\sum_{k=0}^{K-1}
\left(
\sum_{j=0}^{k-1}a^{k-j}d_j
\right)^2
&\le
\left(\frac{a}{1-a}\right)^2
\sum_{k=0}^{K-1}d_k^2 \notag\\
&=
\frac{Na^2}{(1-a)^2}
\sum_{k=0}^{K-1}\|\Delta u^k\|_2^2 .
\end{align}
Combining the preceding bounds yields
\begin{align}
\sum_{k=0}^{K-1}\|\mathbf e^{k,T}\|_2^2
\le{}&
(1+\nu)\frac{a^2}{1-a^2}
\|\mathbf e^{0,0}\|_2^2 \notag\\
&+
(1+\nu^{-1})
\frac{Na^2}{(1-a)^2}
\sum_{k=0}^{K-1}\|\Delta u^k\|_2^2,
\end{align}
which is the claimed estimate.
\end{proof}

\section{Summed absorption}

\begin{corollary}[Summed absorption]\label{cor:absorption}
Under the hypotheses and notation of Theorem~\ref{thm:suffdec_conf}, if, for some \(\nu>0\),
\begin{equation}\label{eq:absorb_cond}
\tilde C_e\,\Gamma_e(T,\nu) < \tilde c_u,
\end{equation}
then for every \(K\ge 1\),
\begin{align}
&\sum_{k=0}^{K-1}\Big(
(\tilde c_u-\tilde C_e\Gamma_e(T,\nu))\|\Delta u^k\|_2^2
+c_s\|\Delta\mathbf s^k\|_2^2
\Big) \notag\\
&\quad\le
\Psi(\xi^0)-\Psi(\xi^K)
+\tilde C_e(1+\nu)
\frac{\sigma^{2T}}{1-\sigma^{2T}}
\|\mathbf e^{0,0}\|_2^2 .
\end{align}

For \(0<\sigma<1\), the summed-absorption condition \eqref{eq:absorb_cond} holds for all \(T\ge T_{\min}^{\mathrm{abs}}(\nu)\), where
\begin{equation}\label{eq:T_min_abs}
T_{\min}^{\mathrm{abs}}(\nu)
:=
\left\lfloor
\frac{
\log\!\bigl(
1+\sqrt{(1+\nu^{-1})N\tilde C_e/\tilde c_u}
\bigr)
}{
\log(1/\sigma)
}
\right\rfloor
+1 .
\end{equation}
If \(\sigma=0\), then \(\Gamma_e(T,\nu)=0\) for every \(T\ge1\); hence the condition holds whenever \(\tilde c_u>0\), and one may set \(T_{\min}^{\mathrm{abs}}(\nu)=1\).
\end{corollary}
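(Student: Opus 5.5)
The plan is to sum the one-step inequality of Theorem~\ref{thm:suffdec_conf} over \(k=0,\dots,K-1\) and absorb the accumulated estimator-error terms using Lemma~\ref{lem:l2_gain}. The two parts of the statement, the summed inequality and the threshold \(T_{\min}^{\mathrm{abs}}(\nu)\), will be handled separately.

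\textbf{Summed inequality.} Rearranging Theorem~\ref{thm:suffdec_conf} for each \(k\) gives
\[
\tilde c_u\|\Delta u^k\|_2^2+c_s\|\Delta\mathbf s^k\|_2^2\le\Psi(\xi^k)-\Psi(\xi^{k+1})+\tilde C_e\|\mathbf e^{k,T}\|_2^2 .
\]
Summing over \(k=0,\dots,K-1\) telescopes the \(\Psi\)-differences into \(\Psi(\xi^0)-\Psi(\xi^K)\). Lemma~\ref{lem:l2_gain} then bounds
\[
\tilde C_e\sum_{k=0}^{K-1}\|\mathbf e^{k,T}\|_2^2\le\tilde C_e(1+\nu)\frac{a^2}{1-a^2}\|\mathbf e^{0,0}\|_2^2+\tilde C_e\Gamma_e(T,\nu)\sum_{k=0}^{K-1}\|\Delta u^k\|_2^2 .
\]
Its hypotheses are exactly Theorem~\ref{thm:iss_2ts}, which holds here. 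Moving the \(\Gamma_e\) term to the left-hand side gives the claimed inequality, with \(a^2=\sigma^{2T}\). Condition \eqref{eq:absorb_cond} is used only to ensure that the resulting coefficient \(\tilde c_u-\tilde C_e\Gamma_e\) is positive. The algebra goes through regardless; the positive coefficient is what makes the bound meaningful. All iterates lie in \(\mathcal U\), so \(\Psi\) is finite along the trajectory and the telescoping is legitimate.

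\textbf{Threshold.} If \(\tilde C_e=0\) the claim is trivial, so assume \(\tilde C_e>0\) and \(0<\sigma<1\). Put \(a=\sigma^T\) and \(\beta:=\sqrt{(1+\nu^{-1})N\tilde C_e/\tilde c_u}\). Condition \eqref{eq:absorb_cond} reads \((1+\nu^{-1})N\tilde C_e a^2/(1-a)^2<\tilde c_u\). Since \(a<1\) and \(\tilde c_u>0\) (implicit in the hypotheses of Theorem~\ref{thm:suffdec_conf}), this is equivalent to \(a/(1-a)<1/\beta\), i.e. \(a<1/(1+\beta)\). The function \(a\mapsto a/(1-a)\) is increasing on \([0,1)\), so this equivalence is legitimate. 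Taking logarithms turns it into \(T\log(1/\sigma)>\log(1+\beta)\).

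The only point needing care is that the floor-plus-one definition in \eqref{eq:T_min_abs} gives a strict inequality. For every \(T\ge T_{\min}^{\mathrm{abs}}(\nu)\) we have \(T>\log(1+\beta)/\log(1/\sigma)\), since \(\lfloor x\rfloor+1>x\). Monotonicity of \(\sigma^T\) in \(T\) then shows the condition persists for all larger \(T\).

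\textbf{Degenerate case.} If \(\sigma=0\), then \(a=0\) and \(\Gamma_e=0\), so \eqref{eq:absorb_cond} reduces to \(\tilde c_u>0\).

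The main obstacle is merely the bookkeeping of the strict versus nonstrict inequality at the threshold together with the sign of \(\tilde c_u\); the analytic content is entirely in Lemma~\ref{lem:l2_gain}.
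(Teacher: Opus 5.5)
Your proposal is correct and follows the paper's proof: you sum the one-step descent of Theorem~\ref{thm:suffdec_conf}, telescope the $\Psi$-differences, bound $\tilde C_e\sum_{k}\|\mathbf e^{k,T}\|_2^2$ via Lemma~\ref{lem:l2_gain}, and move the $\Gamma_e$ term to the left-hand side. Your explicit reduction to $\sigma^T<1/(1+\beta)$, with the floor-plus-one strictness check, fills in the step the paper states only as ``solving this inequality for $T$.''
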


\begin{proof}
Summing Theorem~\ref{thm:suffdec_conf} over \(k=0,\dots,K-1\) gives
\begin{align}
\Psi(\xi^K)-\Psi(\xi^0)
&\le
-\tilde c_u\sum_{k=0}^{K-1}\|\Delta u^k\|_2^2
\notag\\
&\quad
-c_s\sum_{k=0}^{K-1}\|\Delta\mathbf s^k\|_2^2
+\tilde C_e\sum_{k=0}^{K-1}\|\mathbf e^{k,T}\|_2^2 .
\end{align}
Applying Lemma~\ref{lem:l2_gain},
\begin{align}
\sum_{k=0}^{K-1}\|\mathbf e^{k,T}\|_2^2
&\le
(1+\nu)\frac{\sigma^{2T}}{1-\sigma^{2T}}\|\mathbf e^{0,0}\|_2^2 \notag\\
&\quad +\Gamma_e(T,\nu)\sum_{k=0}^{K-1}\|\Delta u^k\|_2^2.
\end{align}
Substitution yields
\begin{align}
&\sum_{k=0}^{K-1}\Big(
(\tilde c_u-\tilde C_e\Gamma_e(T,\nu))\|\Delta u^k\|_2^2
+c_s\|\Delta\mathbf s^k\|_2^2
\Big) \notag\\
&\quad\le
\Psi(\xi^0)-\Psi(\xi^K)
+\tilde C_e(1+\nu)\frac{\sigma^{2T}}{1-\sigma^{2T}}\|\mathbf e^{0,0}\|_2^2.
\end{align}
Thus \(\tilde C_e\Gamma_e(T,\nu)<\tilde c_u\) is sufficient for absorption. Solving this inequality for \(T\) gives \eqref{eq:T_min_abs}.
\end{proof}

\section{Proof of the Relative-Error Lemma
(Lemma~\ref{lem:relerr_conf})}
\label{app:relerr_proof}
\begin{proof}
Projection optimality gives
\begin{align}
\eta^{-1}(u^k-u^{k+1})-\hat g^k \in N_{\mathcal U}(u^{k+1}).
\end{align}
Define
\begin{align}
w_u^{k+1}
&:= \nabla F(u^{k+1})
+\kappa_s\mathbf H^\top(\mathbf H u^{k+1}-\mathbf s^{k+1,0}) \notag\\
&\quad +\eta^{-1}(u^k-u^{k+1})-\hat g^k,
\end{align}
and
\begin{align}
w_s^{k+1}:=\kappa_s(\mathbf s^{k+1,0}-\mathbf H u^{k+1}).
\end{align}
Then \((w_u^{k+1},w_s^{k+1})\in\partial\Psi(\xi^{k+1})\). Using
\(\hat g^k=\nabla F(u^k)+\delta^k\), \(L_g\)-Lipschitz continuity of \(\nabla F\),
\(\|\delta^k\|_2\le L_{\mathrm{blk}}\|\mathbf e^{k,T}\|_2\), and
\(\|\mathbf H^\top z\|_2\le \sqrt N\|z\|_2\), we obtain
\begin{align}
\|w_u^{k+1}\|_2
&\le (\eta^{-1}+L_g)\|\Delta u^k\|_2
+L_{\mathrm{blk}}\|\mathbf e^{k,T}\|_2 \notag\\
&\quad +\kappa_s\sqrt N
\|\mathbf H u^{k+1}-\mathbf s^{k+1,0}\|_2 .
\end{align}

Also,
\begin{align}
\|\mathbf H u^{k+1}-\mathbf s^{k+1,0}\|_2
&= \|\mathbf H\Delta u^k-\mathbf e^{k,T}\|_2 \notag\\
&\le \sqrt N\,\|\Delta u^k\|_2+\|\mathbf e^{k,T}\|_2.
\end{align}
Using \eqref{eq:eT_deltas_bound}, we further have
\begin{align}
\|\mathbf e^{k,T}\|_2
\le
\frac{\sigma^T}{1-\sigma^T}
\|\mathbf{s}^{k+1,0}-\mathbf{s}^{k,0}\|_2 .
\end{align}
Combining this with the analogous bound for \(w_s^{k+1}\) yields
\begin{align}
\dist(0,\partial\Psi(\xi^{k+1}))
\le
C\Big(
\|u^{k+1}-u^k\|_2
+
\|\mathbf{s}^{k+1,0}-\mathbf{s}^{k,0}\|_2
\Big),
\end{align}
for some finite constant \(C>0\), after absorbing fixed problem-dependent constants.
\end{proof}

\section{Proof of KL Convergence in the Potential Case
(Theorem~\ref{thm:kl_conf})}
\label{app:kl_proof}
\begin{proof}
By Corollary~\ref{cor:pointwise_descent}, the sequence satisfies the sufficient decrease condition
\begin{align}
\Psi(\xi^{k+1})
\le
\Psi(\xi^k)
-
\mu\|\xi^{k+1}-\xi^k\|_2^2,
\end{align}
where
\begin{align}
\mu
:=
\min\left\{
\tilde c_u,\,
c_s-\tilde C_e
\left(\frac{\sigma^T}{1-\sigma^T}\right)^2
\right\}
>0 .
\end{align}
By Lemma~\ref{lem:relerr_conf}, it also satisfies the relative-error condition
\begin{align}
\dist(0,\partial\Psi(\xi^{k+1}))
\le
C\|\xi^{k+1}-\xi^k\|_2 .
\end{align}
Since \(\Psi\) is proper, lower semicontinuous, semi-algebraic, and bounded below on its domain, it satisfies the KL property. Moreover, \(\{u^k\}\subset\mathcal U\) is bounded, and sufficient decrease implies that \(\{\Psi(\xi^k)\}\) is nonincreasing and convergent. Since \(F\) is bounded below on the compact set \(\mathcal U\), and
\(\Psi(\xi^k)\le\Psi(\xi^0)\), we have
\begin{equation*}
\frac{\kappa_s}{2}
\|\mathbf s^{k,0}-\mathbf H u^k\|_2^2
\le
\Psi(\xi^0)-\inf_{u\in\mathcal U}F(u).
\end{equation*}
Therefore, the tracking term is bounded; together with \(\{u^k\}\subset\mathcal U\), this implies that \(\{\xi^k\}\) is bounded.

Let \(\{\xi^{k_j}\}\) be any convergent subsequence, with \(\xi^{k_j}\to\bar\xi=(\bar u,\bar{\mathbf s})\). Since \(u^{k_j}\in\mathcal U\) and \(\mathcal U\) is closed, we have \(\bar u\in\mathcal U\). Therefore, \(\iota_{\mathcal U}(u^{k_j})= \iota_{\mathcal U}(\bar u)=0\). By continuity of \(F\) and of the quadratic tracking term,
\begin{align}
\Psi(\xi^{k_j})\to\Psi(\bar\xi).
\end{align}
Thus the subsequence-continuity condition required by the standard KL convergence theorem is satisfied. The standard KL convergence argument~\cite{attouch_convergence_2013, bolte_clarke_2007} for bounded sequences satisfying sufficient decrease and relative error therefore gives finite length,
\begin{align}
\sum_{k=0}^\infty \|\xi^{k+1}-\xi^k\|_2<\infty,
\end{align}
and convergence of \(\xi^k\) to a critical point
\(\xi^\star=(u^\star,\mathbf s^\star)\) of \(\Psi\).

It remains to identify the critical point. Since
\begin{align}
\|\mathbf e^{k,T}\|_2
\le
\frac{\sigma^T}{1-\sigma^T}
\|\mathbf s^{k+1,0}-\mathbf s^{k,0}\|_2
\end{align}
and finite length implies
\(\|\mathbf s^{k+1,0}-\mathbf s^{k,0}\|_2\to0\), we have
\(\|\mathbf e^{k,T}\|_2\to0\). Also
\(\|\Delta u^k\|_2\to0\). Hence
\begin{align}
\|\mathbf s^{k+1,0}-(\mathbf 1_N\otimes u^{k+1})\|_2
\le
\|\mathbf e^{k,T}\|_2+\sqrt N\|\Delta u^k\|_2
\to0 .
\end{align}
Therefore
\begin{align}
\mathbf s^\star=\mathbf 1_N\otimes u^\star .
\end{align}
The \(u\)-component of
\(0\in\partial\Psi(\xi^\star)\) then reduces to
\begin{align}
0\in\nabla F(u^\star)+N_{\mathcal U}(u^\star)
=
g(u^\star)+N_{\mathcal U}(u^\star),
\end{align}
which proves stationarity.
\end{proof}

\section{Numerical Implementation Details}
\label{app:numerical_details}

\subsection{Network construction details}
The controllable buses \(\mathcal V_c=\{3,6,9,12\}\) host the four agents, with assignment \((b_1,b_2,b_3,b_4)=(3,6,9,12)\). The radial feeder has edges
\begin{align}
&(1,2),(2,3),(3,4),(4,5),(3,6),(6,7),\notag\\
&(4,8),(5,9),(9,10),(2,11),(11,12).
\end{align}
The communication graph \(\mathcal G_c\) connects agents within feeder hop distance \(h_c=4\), yielding \(|\mathcal E_c|=5\), \(d_{\max}=3\), and \(\lambda_{\max}(L_c)=4\). Thus \(W=I_M-\alpha L_c\) with \(\alpha=0.249\) satisfies \(\alpha<1/\lambda_{\max}(L_c)=0.25\) and \(\alpha\le1/d_{\max}\).

\subsection{Experiment A implementation and repair rule}
For the reported run, \(K=80\), \(T=25\), \(u^0=\mathbf0\), and \(\mathbf s^{0,0}=\mathbf1_M\otimes u^0=\mathbf0\). The pure-mixing comparison is initialized at \(x^0=[1,\,0.7,\,0.1,\,0.088]^\top\), whose average is \(0.472\), and applies
\begin{align}
x^{k+1}=Wx^k
\end{align}
on \([0,1]^M\), without selector-partitioned source innovations or an outer repair rule. The initialization is arbitrary; any interior initialization produces a fractional mean-field fixed point. After each inner loop, agent \(i\) forms a binary proxy by thresholding each coordinate of its assembled estimate at \(1/2\):
\begin{align}
\tilde u_j^{(i),k}
:=
\mathds{1}\!\left\{
[s_i^{k,T}]_j\ge \frac12
\right\},
\qquad j=1,\ldots,M.
\end{align}
Agent \(i\) then enforces write-authority consistency on its own coordinate by defining
\begin{align}
\hat u_j^{(i),k}
:=
\begin{cases}
u_i^k, & j=i,\\
\tilde u_j^{(i),k}, & j\neq i.
\end{cases}
\end{align}
Agent \(i\) computes its perceived shortfall
\begin{align}
R_i^k
:=
\left[D-\mathbf 1^\top\hat u^{(i),k}\right]_+,
\end{align}
which is integer-valued because \(D\) and \(\hat u^{(i),k}\) are integer-valued. Let
\begin{align}
\mathcal O_i^k
:=
\left\{
j:\hat u_j^{(i),k}=0
\right\},
\end{align}
and let \(\mathcal A_i^k\subseteq\mathcal O_i^k\) contain the \(R_i^k\) lowest-cost indices in \(\mathcal O_i^k\), with ties resolved by increasing agent index. Each agent changes only its own decision according to
\begin{align}
u_i^{k+1}
=
\begin{cases}
1,
& u_i^k=1\ \text{or}\ i\in\mathcal A_i^k,\\
0,
& \text{otherwise}.
\end{cases}
\label{eq:binary_repair}
\end{align}
All agents apply \eqref{eq:binary_repair} simultaneously. Thus, agent \(i\) may identify several indices as candidates for activation, but it has write authority only over its own coordinate \(u_i\).

\subsection{Exact local substitution in Experiment B}
Since \(\mathbf E_i^\top u^k=u_i^k\), \eqref{eq:exact_local_substitution} gives
\begin{align}
\check s_i^{k,T}-u^k
=
(I_d-\mathbf P_i)
(\bar s_i^{k,T}-u^k).
\end{align}
Because \(\mathcal U=\prod_i\mathcal U_i\), \(\check s_i^{k,T}\in\mathcal U\), and
\begin{align}
\|\check s_i^{k,T}-u^k\|_2
&\le
\|\bar s_i^{k,T}-u^k\|_2
\le
\|s_i^{k,T}-u^k\|_2.
\label{eq:exact_local_error_bound}
\end{align}
Therefore,
\begin{align}
\|g_i(\check s_i^{k,T})-g_i(u^k)\|_2
\le
L_i\|s_i^{k,T}-u^k\|_2.
\end{align}
\begin{remark}[Exact local substitution]
\label{rem:exact_local_substitution}
The estimator--controller analysis uses the gradient-evaluation point only through the blockwise mismatch estimate
\begin{align}
\|g_i(y_i^{k,T})-g_i(u^k)\|_2
\le
L_i\|s_i^{k,T}-u^k\|_2.
\end{align}
By \eqref{eq:exact_local_error_bound}, this estimate holds with \(y_i^{k,T}=\check s_i^{k,T}\). Therefore, Theorems~\ref{thm:small_gain_2ts} and~\ref{thm:suffdec_conf}, Corollary~\ref{cor:pointwise_descent}, Lemma~\ref{lem:relerr_conf}, and Theorem~\ref{thm:kl_conf} remain valid, with the same constants, when the projected estimate \(\bar s_i^{k,T}\) is replaced at gradient evaluation by \(\check s_i^{k,T}\).
\end{remark}

\subsection{Lipschitz and KL parameter calculations for Experiment B}
The objective decomposition used in Experiment~B is
\begin{align}
f_i(u)=\frac{1}{2}\gamma_i u_i^2,
\qquad
c_V(u):=V_0+Xu-V_{\max}\mathbf 1,
\qquad
\rho=\lambda.
\end{align}
Since the positive-part mapping is nonexpansive,
\begin{align}
\|g(x)-g(y)\|_2
&\le
\bigl(\|D_\gamma\|_2+2\lambda\|X\|_2^2\bigr)
\|x-y\|_2 .
\end{align}
Thus a valid global Lipschitz constant is
\begin{align}
L_{\mathrm{bound}}
:=
\|D_\gamma\|_2+2\lambda\|X\|_2^2
=
106.6882.
\end{align}
Moreover, \(g_i=\mathbf E_i^\top g\), so
\begin{align}
\|g_i(x)-g_i(y)\|_2
\le L_{\mathrm{bound}}\|x-y\|_2.
\end{align}
Thus choosing \(L_i:=L_{\mathrm{bound}}\) for every block is valid and yields \(L_{\mathrm{blk}}=L_{\mathrm{bound}}\). For the finite-length diagnostic, \(T=15\), \(\sigma^T=0.2101\), and \((\varepsilon,\tau,\kappa_s)=(5.0\times10^{-3},0.5,10)\). The resulting constants are
\begin{align}
C_u&=100.000,
&
C_e&=28.456,
&
\tilde C_e&=30.956,
\notag\\
\tilde c_u&=119.989>0,
&
c_s&=3.263,
&
c_s^{+}(T)&=1.072>0.
\end{align}
Hence the pointwise condition \eqref{eq:pointwise_absorb} holds strictly. For the auxiliary summed-absorption estimate of Corollary~\ref{cor:absorption}, with \(\Gamma_e\) from Lemma~\ref{lem:l2_gain}, take \(\nu=1\). Then
\begin{align}
\Gamma_e(T,1)&=0.566,
\notag\\
\tilde c_u-\tilde C_e\Gamma_e(T,1)
&=102.460>0,
\notag\\
T_{\min}^{\mathrm{abs}}(1)&=9<T=15.
\end{align}

\subsection{Experiment C implementation and diagnostics}\label{app:experiment_c_details}
Experiment~B uses a potential projected-gradient map, whereas Experiment~A uses a discrete repair rule outside the smooth theory; both assign one coordinate per agent. Experiment~C complements them with a smooth non-potential outer map and unequal block dimensions. Dualizing the voltage constraint instead of penalizing it introduces multipliers \(\mu\in\mathbb R^{N_{\mathrm{bus}}}\) and gives, for \(z=(u,\mu)\), the saddle-point coordination map
\begin{align}
g(z)
=
\begin{bmatrix}
D_\gamma u+X^\top\mu\\[1mm]
-\bigl(V_0+Xu-V_{\max}\mathbf 1\bigr)
\end{bmatrix},
\end{align}
on \(\mathcal U=[-1,1]^M\times[0,\bar\mu]^{N_{\mathrm{bus}}}\) with \(\bar\mu=50\). The Jacobian of \(g\) has a nonzero skew-symmetric part, so no potential \(F\) exists: Assumption~\ref{ass:kl} fails and Theorem~\ref{thm:kl_conf} does not apply. Theorem~\ref{thm:small_gain_2ts} does apply, since it requires only Lipschitz continuity of \(g\) on the compact set \(\mathcal U\). Agent \(i\) owns \(u_i\) together with the dual variables of the buses assigned to \(b_i\) by minimum feeder hop distance, with ties resolved by increasing agent index, giving unequal block dimensions \((d_1,\dots,d_4)=(6,3,4,3)\) and \(d=16\). The preceding saddle-map display uses the grouped coordinate order \(z=(u,\mu)\). For the D+LI implementation, a fixed permutation arranges \(z\) into the four agent-owned blocks; this permutation preserves Euclidean norms and the Lipschitz constant. We initialize \(z^0=\mathbf0\) and \(\mathbf s^{0,0}=\mathbf1_M\otimes z^0=\mathbf0\). The communication graph, \(\alpha\), and \(W\) are unchanged from Section~\ref{subsec:num_setup}. Consistent with Remark~\ref{rem:subspace}, the contraction factor is unchanged at \(\sigma=0.901227\) despite the fourfold increase in \(d\) and the unequal blocks. Since \(g\) is affine, \(\|J_g\|_2=1.719239\). Thus choosing \(L_i=L_{\mathrm{blk}}=L_g=1.719239\) is valid. With \(T=15\) and \(\eta=0.2\), the gain condition \eqref{eq:small_gain_2ts} gives \(q_T=0.355<1\). The artificial upper dual cap is inactive throughout the reported trajectory, since 
\begin{align}
\max_{0\le k\le K}\max_j\mu_j^k
=
3.84
<
\bar\mu=50.
\end{align}
Thus, the dual cap serves only to render \(\mathcal U\) compact in this experiment.
Over \(K=1500\) outer iterations,
\begin{align}
\bar\Delta_K
:=
\max_{0\le k<K}\|z^{k+1}-z^k\|_2
=
3.15\times10^{-2}.
\end{align}
Iterating Theorem~\ref{thm:iss_2ts} therefore yields
\(\|\mathbf e^{k,T}\|_2\le1.675\times10^{-2}\) for
\(0\le k<K\).
The observed error satisfies \(\max_{0\le k<K}\|\mathbf e^{k,T}\|_2 = 1.314\times10^{-2}\) (Fig.~\ref{fig:saddle}). 

The final outer iterate reduces the maximum violation to \(2.03\times10^{-3}\) and has quadratic control cost \(\tfrac12 (u^K)^\top D_\gamma u^K=0.139\); we report this trajectory as a diagnostic only, since the present analysis makes no outer-loop convergence claim for non-potential \(g\). The point of the experiment is that the assembly layer retains its finite-\(T\) guarantee under an outer map outside the scope of Assumption~\ref{ass:kl}.

\begin{figure}[!htbp]
\centering
\includegraphics[width=0.75\columnwidth]{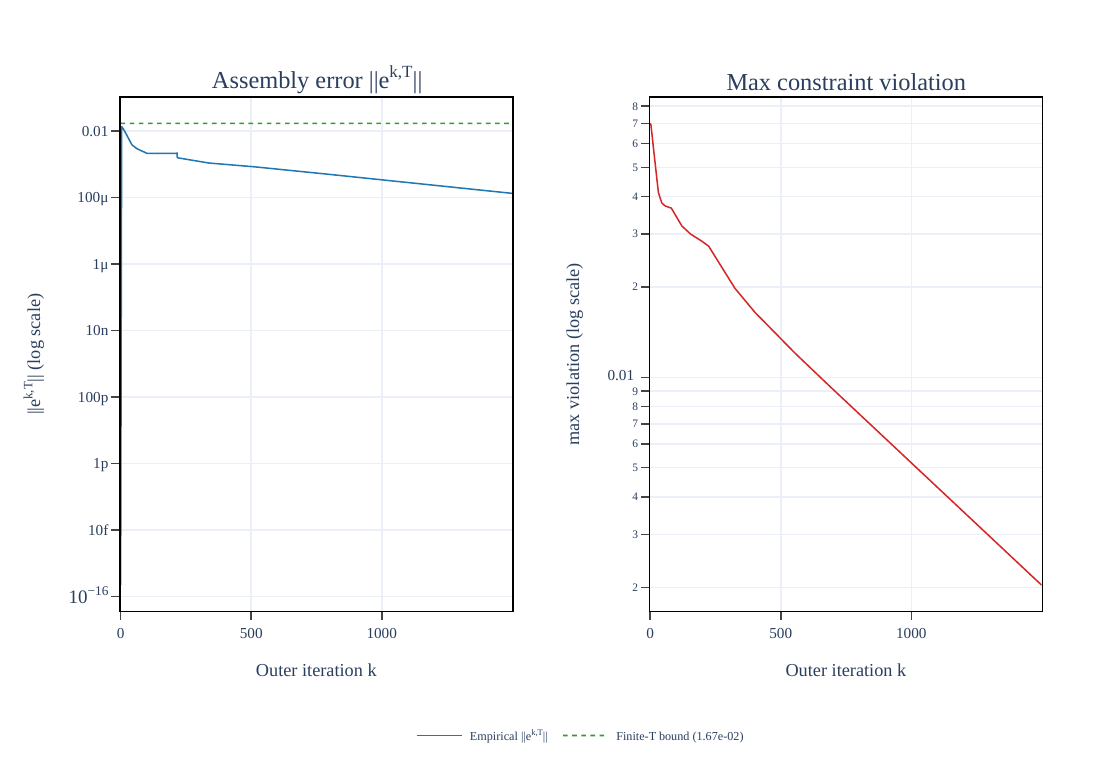}
\vspace{-2mm}
\caption{Experiment~C --- non-potential saddle-point outer map. Left: assembly error \(\|\mathbf e^{k,T}\|_2\) against the finite-horizon bound obtained by iterating Theorem~\ref{thm:iss_2ts}. Right: maximum constraint violation (log scale).}
\label{fig:saddle}
\vspace{-2mm}
\end{figure}

\subsection{Finite-horizon tracking calculations}
\label{app:tracking_calculations}
For Experiment~A, \(M=4\) and the global variation bound \(\bar\Delta=\sqrt M=2\). Corollary~\ref{cor:tracking} therefore gives
\begin{align}
\limsup_{k\to\infty}\|\mathbf e^{k,T}\|_2
\le 3.209\times10^{-1}.
\end{align}
For the reported finite trajectory, let \(a:=\sigma^T\). Unrolling the estimator recursion from \(\mathbf e^{0,0}=\mathbf0\) gives, for every \(0\le k<K\),
\begin{align}
\|\mathbf e^{k,T}\|_2
&\le
\sqrt M\sum_{j=0}^{k-1}
a^{k-j}\|\Delta u^j\|_2
\notag\\
&\le
\frac{\sqrt M\,a}{1-a}
\max_{0\le j<K}\|\Delta u^j\|_2.
\end{align}
Since \(\max_{0\le j<K}\|\Delta u^j\|_2=\sqrt3\),
\begin{align}
\max_{0\le k<K}\|\mathbf e^{k,T}\|_2
\le 2.779\times10^{-1}.
\end{align}

For Experiment~B, the main tracking run uses \(K=80\), \(T=10\), and \(\eta=0.003\). With \(M=4\), \(L_{\mathrm{blk}}\le L_{\mathrm{bound}}=106.6882\), and \(\sigma=0.901227\),
\begin{align}
q_T
&=
\sigma^T\bigl(1+\sqrt M\,\eta L_{\mathrm{bound}}\bigr)
\notag\\
&=0.580<1.
\end{align}
The associated tracking-gain ratio is
\begin{align}
\frac{\sqrt M\,\eta L_{\mathrm{bound}}\sigma^T}
{1-\sigma^T}
=0.350.
\end{align}
Over the simulated horizon, define
\begin{align}
\bar\Delta_K
&:=
\max_{0\le k<K}\|u^{k+1}-u^k\|_2
\notag\\
&=3.56\times10^{-2}.
\end{align}
Since \(\mathbf e^{0,0}=\mathbf0\), iteration of the estimator recursion yields
\begin{align}
\|\mathbf e^{k,T}\|_2
&\le
\frac{\sqrt M\,\sigma^T}{1-\sigma^T}\bar\Delta_K
\notag\\
&=3.89\times10^{-2},
\qquad 0\le k<K.
\end{align}
This is an a posteriori bound for the reported finite trajectory, not an independently established infinite-horizon variation bound. The observed tail error is
\begin{align}
\max_{40\le k<K}\|\mathbf e^{k,T}\|_2
=1.98\times10^{-3}.
\end{align}

\subsection{Supplementary numerical figures}
\label{app:supplementary_figures}

\begin{figure}[!htbp]
\centering
\includegraphics[width=\columnwidth]{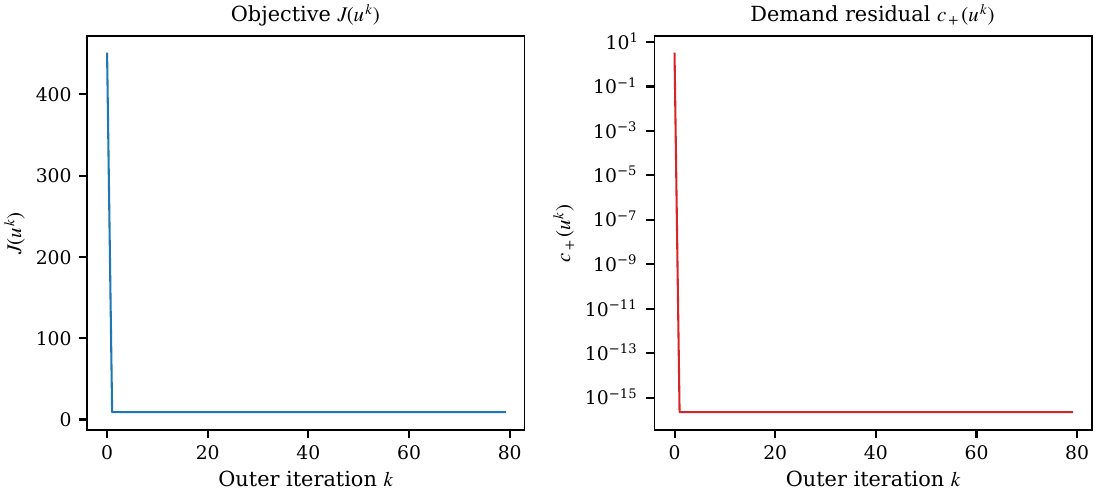}
\caption{Experiment~A: objective and demand residual on a logarithmic scale.}
\label{fig:outer_metrics_disc}
\end{figure}

\begin{figure}[!htbp]
\centering
\includegraphics[width=\columnwidth]{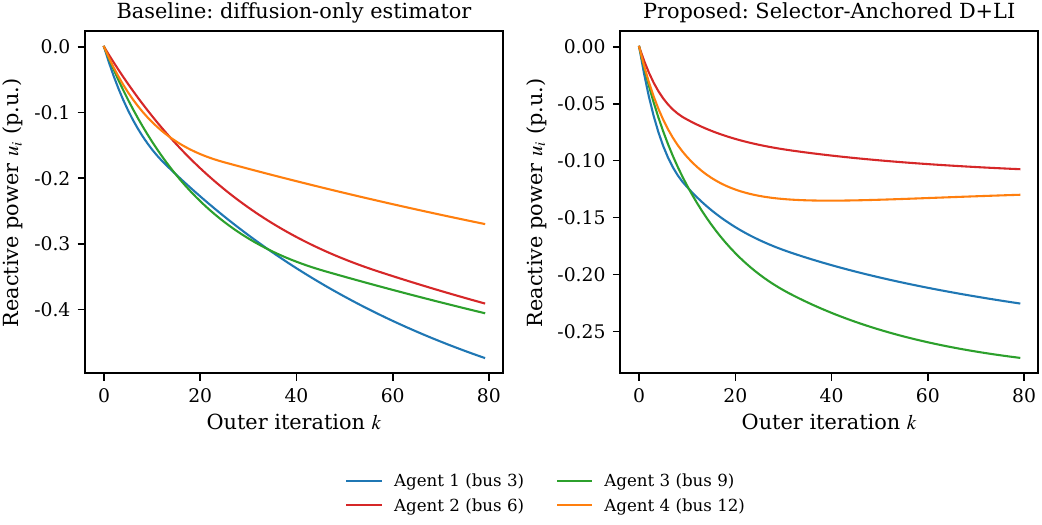}
\caption{Experiment~B: reactive-power trajectories for the pure-mixing baseline and D+LI.}
\label{fig:traj_smooth}
\end{figure}

\begin{figure}[!htbp]
\centering
\includegraphics[width=\columnwidth]{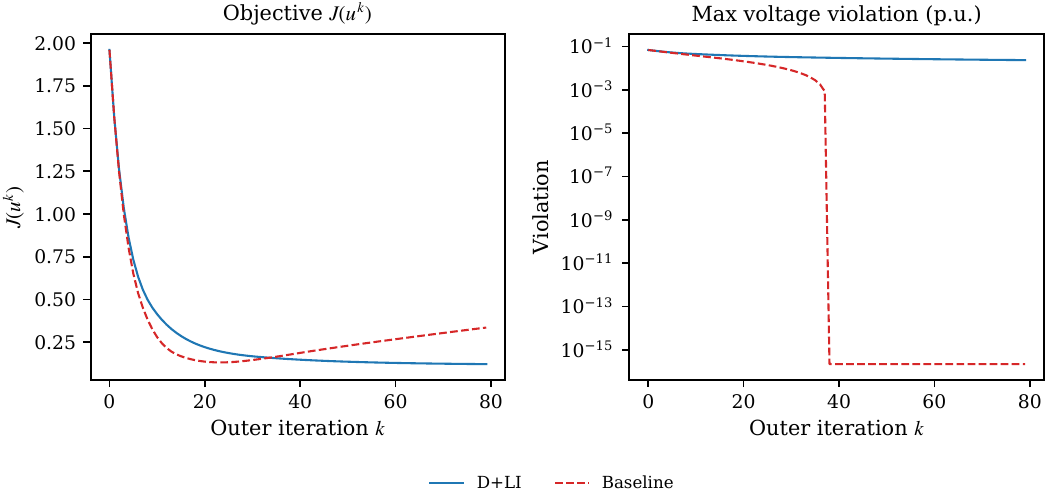}
\caption{Experiment~B: objective and maximum voltage violation on a logarithmic scale.}
\label{fig:outer_metrics_smooth}
\end{figure}

\begin{figure}[!htbp]
\centering
\includegraphics[width=0.70\linewidth]{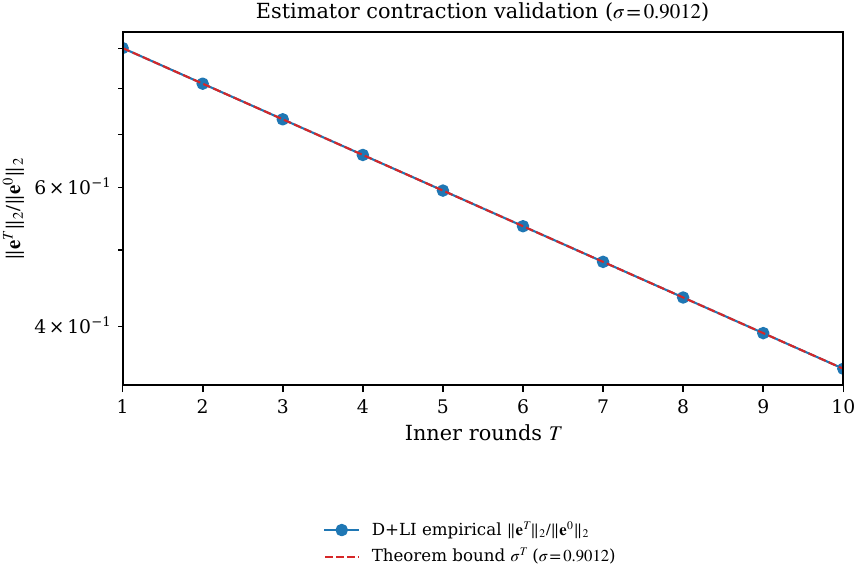}
\caption{Experiment~B: contraction tightness under dominant-mode
initialization. The ratio
\(\|\mathbf e^T\|_2/\|\mathbf e^0\|_2\) attains
\(\sigma^T\), with \(\sigma=0.9012\).}
\label{fig:iss_smooth}
\end{figure}

\begin{figure}[!htbp]
\centering
\includegraphics[width=\columnwidth]{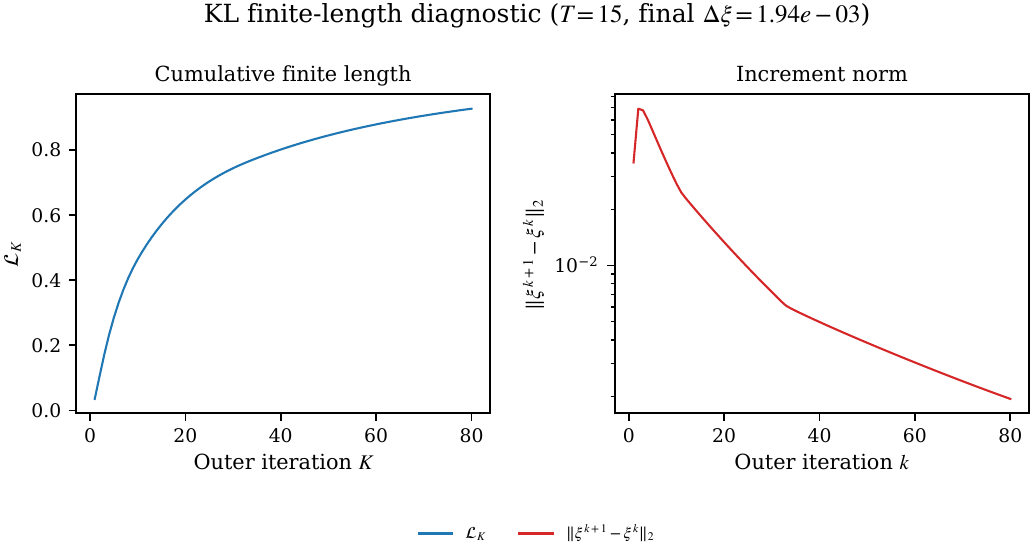}
\caption{Experiment~B: KL finite-length diagnostic for the admissible parameter tuple. The cumulative length \(\mathcal L_K=\sum_{k=0}^{K-1}\|\xi^{k+1}-\xi^k\|_2\) stabilizes while \(\|\xi^{k+1}-\xi^k\|_2\) decays.}
\label{fig:finite_length_smooth}
\end{figure}

\FloatBarrier

\section*{Acknowledgment}
LLM tools (ChatGPT~\cite{openai_chatgpt}, Claude~\cite{anthropic_claude}, and Gemini~\cite{google_gemini}) assisted with review, proofreading, and consistency checks. The authors retain full responsibility for all content and results.

\bibliographystyle{IEEEtran}
\bibliography{references}

\end{document}